\title{The large scale geometry of strongly aperiodic subshifts of finite type}
\author{David Bruce Cohen}
\DeclareMathOperator{\Nn}{\ensuremath{\mathcal{N}}}
\newcommand{\Cay}{Cay}
\newcommand{\RRD}{R\Delta}
\DeclareMathOperator{\ZZ}{\ensuremath{\mathbb{Z}}}
\DeclareMathOperator{\AAA}{\ensuremath{\mathbb{A}}}
\DeclareMathOperator{\NN}{\ensuremath{\mathbb{N}}}
\newcommand\la{\ensuremath{\langle}}
\newcommand\ra{\ensuremath{\rangle}}
\newcommand\To{\ensuremath{\rightarrow}}
\newcommand\smin{\ensuremath{\setminus}}
\newcommand\omg{\ensuremath{\sigma}}
\newcommand\gax{\ensuremath{g_{ax}}}
\newcommand\fS{\ensuremath{\mathcal{S}}}
\newcommand\fSp{\ensuremath{\mathcal{S}^{\prime}}}
\newcommand\frm{\ensuremath{\mathfrak{m}}}
\newcommand\omo{\ensuremath{\sigma_{0}}}
\newcommand\BnG{\ensuremath{B_{G}(n,1_{G})}}
\newcommand\GmS{\ensuremath{\Gamma_{S}}}
\newcommand\fCi{\ensuremath{\mathcal{C}_{i}}}
\newcommand\BHN{\ensuremath{B_{H}(N,1_{H})}}
\newcommand\oml{\ensuremath{\sigma_{\ell}}}
\newcommand\omd{\ensuremath{\sigma_{d}}}
\newcommand\lFf{\ensuremath{\ell_{Ff}}}
\newcommand\Fom{\ensuremath{F_{\sigma}}}
\newcommand\Fomg{\ensuremath{F_{\sigma g}}}
\DeclareMathOperator{\Lip}{Lip}
\DeclareMathOperator{\QIP}{QIP}
\DeclareMathOperator{\Stab}{Stab}
\theoremstyle{plain}
\newtheorem{theorem}{Theorem}[section]
\newtheorem{lemma}[theorem]{Lemma}
\newtheorem{conjecture}[theorem]{Conjecture}
\newtheorem{proposition}[theorem]{Proposition}
\newtheorem{corollary}[theorem]{Corollary}
\newtheorem{definition}[theorem]{Definition}
\begin{document}

\maketitle
\begin{abstract}
A subshift on a group $G$ is a closed, $G$-invariant subset of $A^{G}$, for some finite set $A$. It is said to be a subshift of finite type (SFT) if it is defined by a finite collection of ``forbidden patterns", to be strongly aperiodic if all point stabilizers are trivial, and weakly aperiodic if all point stabilizers are infinite index in $G$. We show that groups with at least $2$ ends have a strongly aperiodic SFT, and that having such an SFT is a QI invariant for finitely presented torsion free groups. We show that a finitely presented torsion free group with no weakly aperiodic SFT must be QI-rigid. The domino problem on $G$ asks whether the SFT specified by a given set of forbidden patterns is empty. We show that decidability of the domino problem is a QI invariant.
\end{abstract}

\section{Introduction}
\label{section:Introduction}
Recall that a topological dynamical system is a pair $(\Omega, G)$ where $G$ is a group acting by homeomorphisms on the compact space $\Omega$. For instance, if $A$ is a finite discrete set, then the group $G$ acts (on the right) on the compact space $A^{G}$ by homeomorphisms via
$$(\sigma\cdot h)(g)=\sigma(hg).$$
This action makes the pair $(A^{G},G)$ into a topological dynamical system called the right shift. When $G=\ZZ$, an element $h$ of $G$ acts on a bi-infinite word $\sigma\in A^{G}$ by ``shifting" it, whence the name. A closed, $G$-invariant subset of $A^{G}$ is known as a subshift. To say that a subshift $X$ codes a dynamical system $(\Omega,G)$ means that there exists a continuous $G$-equivariant surjection from $X$ to $\Omega$.

\paragraph{Subshifts of finite type (see \cite[\S 2]{cp}).} How would one construct a subshift? The simplest idea is to start with a closed set $C$ of $A^{G}$ and intersect its $G$-translates. The most important case of this construction arises when $C$ is determined by finitely many coordinates.
\begin{definition}
\label{definition:ssft}
Let $A$ be a finite set and $G$ a group. If $S$ is a finite subset of $G$ and $L$ a subset of $A^{S}$, then the clopen set
$$\{\sigma\in A^{G}:\sigma|_{S}\in L\}$$
is known as a cylinder set. If $C$ is a cylinder set, then the set $X$ given by $\bigcap_{g\in G}(C\cdot g)$ is called a subshift of finite type. We say that $X$ is defined on $S$. If $F$ is a finite set, then $\alpha\in A^{F}$ is called a forbidden pattern for $X$ if it is never equal to $(\sigma\cdot g)|_{F}$ for any $\sigma\in X$.
\end{definition}

We say that a subshift of finite type $X\subset A^{G}$ is defined by a finite collection $\mathcal{F}$ of forbidden patterns $\alpha_{i}:F_{i}\To A$ if $X$ is exactly the set of $\sigma\in A^{G}$ such that $(\sigma\cdot g)|_{F_{i}}$ is not equal to $\alpha_{i}$ for any $i$ and any $g\in G$.

\subsection{Aperiodicity and the domino problem.}
\label{subsection:wang}
Given a finite set of forbidden patterns $\mathcal{F}$, it is entirely possible that the subshift of finite $X_{\mathcal{F}}$ defined by $\mathcal{F}$ is empty.

\begin{definition}
\label{definition:domino}
Let $G$ be a finitely generated group. We say that $G$ has decidable domino problem if there exists an algorithm which takes as input a finite set of forbidden patterns $\mathcal{F}$ and determines whether the subshift they define is empty.
\end{definition}

\paragraph{The domino problem for $\mathbb{Z}$.} Suppose we are given a finite set of forbidden patterns $\mathcal{F}$ defining a subshift of finite type $X_{\mathcal{F}}$ over $\mathbb{Z}$. By compactness $X_{\mathcal{F}}$ is empty only if there is some $n$ such that every $A$-coloring of the $n$-ball in $\mathbb{Z}$ includes some forbidden pattern. Hence, if $X_{\mathcal{F}}$ is empty, a Turing machine may discover this fact in finite time.

On the other hand, if $X_{\mathcal{F}}$ is nonempty, one would like to certify nonemptiness by finding a ``constructible" $\sigma\in A^{G}$ which can be proven to lie in $X_{\mathcal{F}}$. The simplest constructible elements of $A^{\mathbb{Z}}$ are periodic---i.e., fixed by translation by some $n$ and hence of the form $\ldots www\ldots$ for some word $w$ of length $n$. In fact, every nonempty subshift of finite type over $\mathbb{Z}$ contains such elements (Figure \ref{figure:z} in \S\ref{section:ends}), and the domino problem for $\ZZ$ can be decided by finding periodic elements. One can also discuss periodicity over more general groups.

\begin{definition}
\label{definition:sa}
Let $G$ be a finitely generated group, $X\subset A^{G}$ a nonempty subshift, and $\sigma$ a point of $X$. Then $\sigma$ is said to be periodic if it has nontrivial stabilizer in $G$, and is said to be $g$-periodic for any $g\in \Stab_{G}\sigma$.  If $X$ contains no periodic points, then $X$ is said to be strongly aperiodic. If $X$ contains no points with finite index stabilizer, $X$ is said to be weakly aperiodic.
\end{definition}

\begin{figure}[t]
\labellist
\small\hair 2pt

\endlabellist

\centering
\centerline{\psfig{file=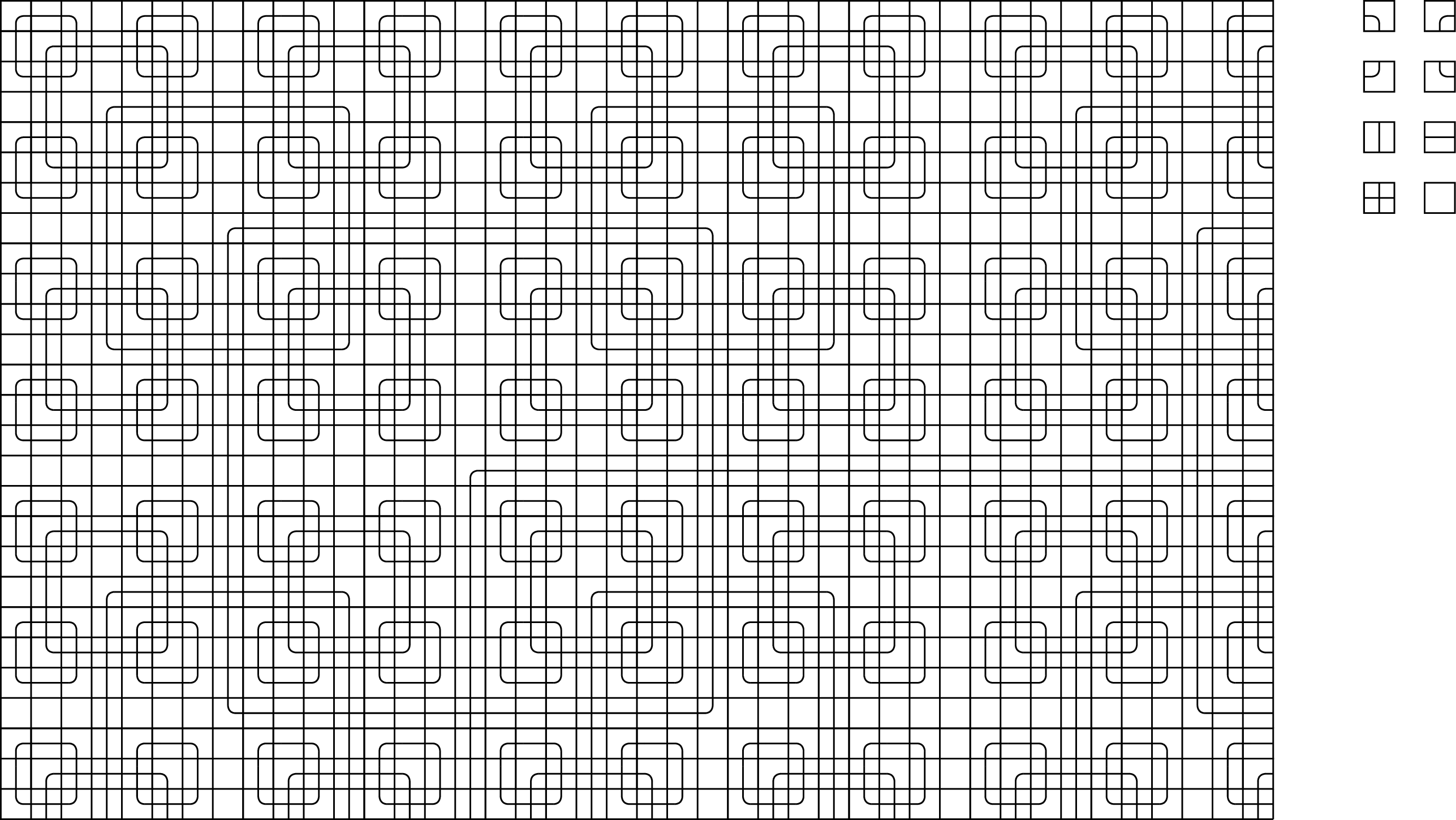,scale=60}}
\caption{An interesting tiling of the plane using the set $A$ consisting of the eight tiles depicted at right.  The orbit closure in $A^{\ZZ^{2}}$ of the given pattern is strongly aperiodic, and is coded by a subshift of finite type called the Robinson tiling, in which the tiles carry slightly more data.}
\label{figure:robinson}
\end{figure}

\paragraph{The domino problem for $\mathbb{Z}^{2}$ (Wang tiles).} Suppose we are given a finite set $A$ of $1\times 1$ square tiles, such that each edge of each tile is assigned some color. We say that $A$ tiles the plane if we may fill out the entire plane with copies of these tiles such that neighboring edges have the same color. In the simplest examples, a collection of tiles which successfully tiles the plane can do so periodically. Wang conjectured that this is always the case---i.e., that $A$ tiles periodically if it tiles at all. He observed that if his conjecture were true, then a Turing machine could, given $A$, decided whether $A$ tiles the plane.

Wang's conjecture was disproved by his student Berger \cite{berger}, who showed that no Turing machine can decide whether a given set of tiles can tile the plane, and found an explicit set $A$ of tiles which can tile the plane, but cannot do so periodically. Since then, many people have obtained interesting tile sets with this property.  Our favorite is the Robinson tiling \cite{robinson}, which codes the orbit closure of the pattern depicted in Figure \ref{figure:robinson}. Of course, if $A$ tiles the plane, but cannot tile it periodically, then we obtain a strongly aperiodic subshift of finite type inside $A^{\ZZ^{2}}$, where the forbidden patterns consist of pairs of adjacent tiles with non-matching edges.

\subsection{Known results}
\label{subsection:knownresults}
Consider the following questions.
\begin{itemize}
\item Which groups have decidable domino problem?
\item Which groups admit strongly aperiodic subshifts of finite type?
\item Which groups admit weakly aperiodic subshifts of finite type?
\end{itemize}
All three of these questions are open, and our main theorems will concern their answers. Before explaining our results, here is a brief survey of known work and conjectured answers.

\paragraph{Which groups have decidable domino problem?} Berger's result shows that $\ZZ^{2}$ has undecidable domino problem, whereas we have remarked that $\ZZ$ is known to have decidable domino problem. Aubrun and Kari have shown that the Baumslag Solitar groups have undecidable domino problem \cite{aubrunkari}. Ballier and Stein \cite{ballierstein}, building on results from several authors \cite{mullerschupp1}\cite{mullerschupp2}\cite{kuskelohrey}\cite{jeandeltheyssier}, observe that every virtually free group has decidable domino problem, and conjecture that these are the only such groups. 

\paragraph{Which groups have strongly aperiodic subshifts of finite type?} Berger showed that $\ZZ^{2}$ has a strongly aperiodic subshift of finite type \cite{berger}. Many other groups are known to admit such subshifts, including higher rank free abelian groups \cite{kariculik}, solvable Baumslag Solitar groups \cite{aubrunkari}, the integral Heisenberg group \cite{ssu}, cocompact lattices in higher rank simple Lie groups \cite{mozes}, and the direct product of Thompson's group $T$ with $\ZZ$ \cite{jeandel}. Forthcoming work of the author and Goodman-Strauss will show that surface groups also have such subshifts \cite{chaim}.

On the other hand, no free group has a strongly aperiodic subshift of finite type \cite{piantadosi}. When we began writing this paper, this was the only known negative result. However, Jeandel has since discovered a remarkable obstruction to admitting a strongly aperiodic subshift of finite type, as discussed in Subsection \ref{subsection:ggt}.

\paragraph{Which groups have weakly aperiodic subshifts of finite type?} We have remarked that $\ZZ$ has no weakly aperiodic subshift of finite type, but many groups are known not to share this property. In particular, nonamenable groups \cite{bw}, free abelian groups \cite{berger}, Baumslag Solitar groups \cite{aubrunkari}, and Grigorchuk's group \cite{marcinkowskinowak} cannot have weakly aperiodic subshifts of finite type. Carroll and Penland have conjectured that a group has a weakly aperiodic subshift of finite type if and only if it is virtually cyclic \cite{carpen}.

\subsection{Endedness and QI invariance}
\label{subsection:ggt}
We will prove four theorems which show that for a given group $G$, the answers to the above questions (decidability of the domino problem and existence of strongly/weakly aperiodic subshifts of finite type) are closely related to the geometry of $G$---meaning the geometry of its Cayley graph. Recall that the Cayley graph of a group $G$ with respect to a generating set $S$ is the graph whose vertex set is $G$, with an edge between vertices $g$ and $h$ whenever $gs=h$ for some $s$ in $S$ (Figure \ref{figure:cayley} depicts some Cayley graphs). The argument used by Piantadosi \cite{piantadosi} to show that free groups have no strongly aperiodic subshift of finite type is based on the fact that the Cayley graph of a free group may be disconnected by removing a sufficiently large ball around a point.  The following definition captures this idea.

\begin{definition}
\label{definition:endedness}
Let $S$ be a finite generating set for a group $G$. The number of ends of $G$ is defined to be the limit as $n$ goes to infinity of the number of unbounded connected components of $G\smin B_{n}$, where $B_{n}$ is the ball of radius $n$ around $1_{G}$ in the Cayley graph of $G$. It is understood that this limit is often infinite.
\end{definition}

\begin{figure}[t]
\labellist
\small\hair 2pt

\pinlabel $\ZZ/4\ZZ$ at 25 275
\pinlabel $\ZZ^{2}$ at 200 275
\pinlabel $\ZZ$ at 375 275
\pinlabel $\ZZ\ast\ZZ$ at 610 275
\pinlabel $\text{0 ends}$ at 25 -5
\pinlabel $\text{1 end}$ at 200 -5
\pinlabel $\text{2 ends}$ at 375 -5
\pinlabel $\infty\quad\text{ends}$ at 610 -5
\endlabellist

\centering
\centerline{\psfig{file=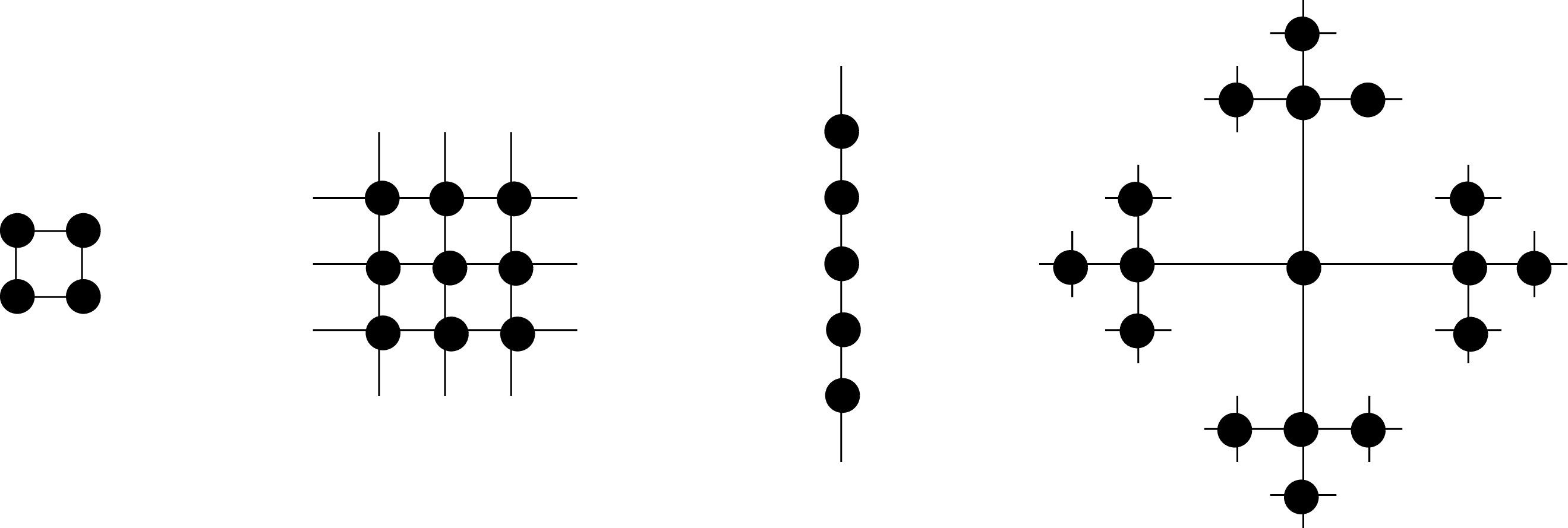,scale=60}}
\caption{Some Cayley graphs of groups with respect to their standard generating sets.}
\label{figure:cayley}
\end{figure}

The number of ends of $G$ is invariant under changing the generating set $S$. The point is that, if $S^{\prime}$ is some other finite generating set, then the Cayley graphs associated to $S$ and $S^{\prime}$ are quasi isometric (Definition \ref{definition:qi}) and the the number of ends is a ``QI invariant".  Hopf discovered that the number of ends of a group is either $0,1,2,$ or $\infty$ \cite{hopf}, and Stallings \cite{stallings} showed that a group has at least $2$ ends if and only if it splits nontrivially as an amalgamated free product or HNN extension over a finite group (of course, $G$ has $0$ ends if and only if it is finite). In the case of torsion free groups, having one end is equivalent to being neither cyclic nor a free product.

Consider the groups $G$ listed above for which we know whether or not $G$ admits strongly aperiodic subshifts of finite type.  Of the groups not admitting such subshifts, $\ZZ$ has $2$ ends, and higher rank free groups have infinitely many ends. On the other hand, the groups known to have such subshifts---the Heisenberg group, Thompson's $T$ direct product $\ZZ$, cocompact lattices, and free abelian groups---are all one ended. We shall prove the following theorem.

\begin{theorem}
\label{theorem:mainthmends}
If $G$ is a finitely generated group with at least $2$ ends, then $G$ does not admit a strongly aperiodic subshift of finite type.
\end{theorem}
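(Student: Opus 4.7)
The plan is to prove Theorem \ref{theorem:mainthmends} by showing that every nonempty SFT on $G$ must contain a periodic point, via a pigeonhole plus cut-and-paste argument exploiting the fact that removing a large ball from the Cayley graph disconnects it. Let $X \subset A^{G}$ be a nonempty SFT defined by finitely many forbidden patterns whose supports lie inside the ball $B_{R}$ of radius $R$ around $1_{G}$. Because $G$ has at least two ends, I can choose $N \geq R$ so that $G \setminus B_{N}$ has at least two unbounded connected components, and I fix one such component $C$. For any $\sigma \in X$ and any $g \in C$, the ``local view'' $(\sigma\cdot g)|_{B_{N}}$ lies in the finite set $A^{B_{N}}$, so along an infinite geodesic ray in $C$ the pigeonhole principle produces elements $g_{1}, g_{2} \in C$ at distance greater than $2N$ with $(\sigma\cdot g_{1})|_{B_{N}} = (\sigma\cdot g_{2})|_{B_{N}}$.

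The main step is to promote this matching to a global periodicity. Set $h = g_{1} g_{2}^{-1}$, which I would like to be of infinite order and to translate the end of $G$ toward which $g_{1}, g_{2}$ accumulate. I would build $\sigma' \in A^{G}$ by choosing a fundamental domain $D$ for the action of $\langle h \rangle$ on $G$ whose ``boundary'' is cut along the translates $h^{n} B_{N}$, and setting $\sigma'(g) = \sigma(g_{0})$, where $g_{0}$ is the unique representative of $g$'s $\langle h \rangle$-orbit that lies in $D$. In the prototype case $G = \mathbb{Z}$ this is nothing more than extending $\sigma|_{[g_{1},\, g_{2}-1]}$ periodically with period $|g_{2} - g_{1}|$; the general construction is supposed to reduce to this once $D$ is chosen to respect the component structure of $G \setminus B_{N}$.

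To verify $\sigma' \in X$, I would check each forbidden-pattern window: windows contained inside a single translate of $D$ match $\sigma$ directly; windows straddling the boundary between two adjacent translates of $D$ coincide with a window of $\sigma$ near $g_{1}$ (or $g_{2}$) thanks to the matching condition $(\sigma\cdot g_{1})|_{B_{N}} = (\sigma\cdot g_{2})|_{B_{N}}$ together with $R \leq N$. Since $h$ is nontrivial and $\sigma' \cdot h = \sigma'$, this contradicts strong aperiodicity.

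The hard part will be arranging the fundamental domain $D$ so that the cut-and-paste is valid for an arbitrary $G$ with at least two ends, not just $\mathbb{Z}$. For 2-ended (virtually $\mathbb{Z}$) groups the construction reduces cleanly to the $\mathbb{Z}$ case through a finite-index cyclic subgroup. For infinitely ended groups I expect to invoke Stallings' theorem to split $G$ as $A *_{F} B$ or $A *_{F}$ with $F$ finite, and use the resulting Bass--Serre tree to organize both the disconnection of $G$ by translates of $B_{N}$ and the axis along which $h$ acts as a hyperbolic translation; the $\langle h \rangle$-axis then provides a combinatorial skeleton along which $D$ can be laid out so that its boundary components are genuinely separated by finite sets, as needed for the verification step above.
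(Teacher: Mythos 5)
Your overall strategy (pigeonhole on local views, then cut-and-paste along a fundamental domain to manufacture a periodic point) is the same as the paper's, but there is a genuine gap at the step where you pass from the matching pair to a group element acting periodically. You apply the pigeonhole principle to the translates $(\sigma\cdot g)|_{B_N}$ for $g$ ranging over a geodesic ray in one unbounded component $C$, and then set $h=g_1g_2^{-1}$. For two arbitrary points $g_1,g_2$ on a ray there is no reason for $h$ to ``translate the end'': the orbit $\{h^k g_1 B_N\}_{k\in\mathbb{Z}}$ need not be a pairwise separated bi-infinite chain of balls (indeed $h$ need not even have infinite order), and without that separation your fundamental domain $D$ cannot be cut along the translates $h^nB_N$, nor can the straddling-window verification be carried out. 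You acknowledge this (``which I would like to be of infinite order and to translate the end'') but never establish it, and it is precisely the hard point. The paper's resolution is to reverse the order of quantifiers: it first constructs an $n$-axial element $g$ --- one for which $g^bB_n$ separates $g^aB_n$ from $g^cB_n$ for all $a<b<c$ --- by taking $x,y$ of norm greater than $2n$ in two \emph{distinct} unbounded components of $G\smin B_n$ and setting $g=x^{-1}y$ (Lemma \ref{lemma:naxialexistence}), and only then applies pigeonhole to the countable family $\{\sigma_0\cdot g^m\}_{m\in\mathbb{Z}}$. The two matching translates are then automatically of the form $g^{m_1},g^{m_2}$, so the resulting period $g^{m_2-m_1}$ is itself axial and its powers give the separated chain of balls needed to build the fundamental domain out of connected components of $\Gamma_S\smin\bigcup_k g^{m_1+k(m_2-m_1)}B$.

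A secondary point: your fallback of invoking Stallings' theorem and Bass--Serre theory for the infinitely-ended case is both unnecessary and, as written, only a sketch (you would still need to produce a hyperbolic element whose translation length and axis are compatible with the specific $g_1,g_2$ handed to you by pigeonhole, which is the same difficulty in different clothing). The paper's separation lemma (Lemma \ref{lemma:separation}) handles two-ended and infinitely-ended groups uniformly and elementarily, with no splitting theorem required. If you repair the first gap by constructing the axial element before applying pigeonhole, the Stallings detour can be dropped entirely.
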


In an earlier version of this paper, we conjectured the converse.

\begin{conjecture}
\label{conjecture:main}
Let $G$ be an infinite, finitely generated group. Then $G$ admits a strongly aperiodic subshift of finite type if and only if it is one ended.
\end{conjecture}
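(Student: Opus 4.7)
The conjecture splits into two implications. The ``only if'' direction is immediate from Theorem \ref{theorem:mainthmends}: an infinite finitely generated group has $1$, $2$, or $\infty$ ends by the theorem of Hopf \cite{hopf}, so if it is not one ended then it has at least $2$ ends, and hence admits no strongly aperiodic subshift of finite type. The entire substance of the conjecture therefore lies in the ``if'' direction, that every infinite one-ended finitely generated group should admit a strongly aperiodic SFT.

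My plan for the ``if'' direction is to attack the problem case-by-case via the geometry of $G$, leveraging two organizing principles: the quasi-isometric invariance of admitting a strongly aperiodic SFT (at least in the torsion free finitely presented case, as announced in this paper), and the existing library of constructions for $\ZZ^{d}$, the integral Heisenberg group, surface groups, and cocompact lattices in higher rank Lie groups. First I would broaden this library by proving closure properties: if $H\leq G$ has finite index, or if $G$ is a central extension of $H$ by $\ZZ$, or if $G$ is commensurable with $H$, one should be able to transport a strongly aperiodic SFT from $H$ to $G$. Second, I would organize one-ended groups along structural dichotomies. For hyperbolic one-ended groups the connected Gromov boundary should supply a hierarchical scaffolding in the spirit of the Robinson tiling, with the forthcoming surface group construction \cite{chaim} as the natural first test case beyond $\ZZ^{2}$. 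For elementary amenable one-ended groups one should exploit Bieri--Neumann--Strebel type splittings and inductively build the SFT from pieces on quotients and kernels, extending \cite{ssu} and \cite{aubrunkari}. For CAT(0) groups acting on one-ended complexes one might adapt Mozes' cocompact-lattice machinery.

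The main obstacle is that ``one-ended'' is a geometric condition of remarkable generality, and there is no structure theorem collecting such groups into tractable families. Exotic one-ended examples such as Tarski monsters, Grigorchuk's group, Thompson's group $F$, and finitely generated simple groups do not fit any template in the current library, and Jeandel's obstruction noted in \S\ref{subsection:ggt} shows that one-endedness alone may not suffice: some obstructions to strong aperiodicity come from the word problem rather than from large scale geometry. Pinning down the precise geometric feature of one-endedness responsible for the existence of strongly aperiodic SFTs --- or else producing a counterexample that refines the conjecture to a narrower class --- is where I expect the fundamental new idea to be needed, and it is precisely because this feature has not been identified that the statement remains a conjecture rather than a theorem.
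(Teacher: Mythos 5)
There is a genuine gap, and it is fatal to the statement itself: the ``if'' direction of this biconditional is false, and the paper says so explicitly in the paragraph immediately following Conjecture \ref{conjecture:main}. By Jeandel's result \cite{jeandel}, a finitely generated group with undecidable word problem cannot admit a strongly aperiodic subshift of finite type, and there are many one ended (indeed finitely presented, one ended) groups with undecidable word problem. So no case analysis, closure property, or structural dichotomy can establish the forward implication; the counterexample you treat as a hypothetical outcome (``or else producing a counterexample that refines the conjecture'') is already known to exist. You correctly flag Jeandel's obstruction as a warning sign, but you stop short of the conclusion the paper draws, namely that only the ``only if'' implication survives. That half of your argument is correct and is exactly the paper's reasoning: an infinite finitely generated group that is not one ended has at least two ends by Hopf \cite{hopf}, and Theorem \ref{theorem:mainthmends} then forbids a strongly aperiodic subshift of finite type. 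But that half is not the substance of the conjecture.

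Separately from the falsity of the target, your treatment of the ``if'' direction contains no proof at all --- it is a research program. None of the claimed closure properties (finite index, central extensions by $\ZZ$, commensurability) is proved or reduced to a citation that covers it, and the hyperbolic, elementary amenable, and CAT(0) cases are each left as open-ended suggestions rather than arguments. Even if the conjecture were true, this text would not constitute a proof; as it stands, the only verifiable content is the ``only if'' half, which the paper already establishes as Theorem \ref{theorem:mainthmends}.
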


This conjecture is now known to be false by work of Jeandel \cite[Corollary 2.7]{jeandel}, which shows that groups with undecidable word problem cannot have strongly aperiodic subshifts of finite type. There are many known examples of one ended groups with undecidable word problem.

\paragraph{QI invariance.} In order to state our other theorems, we require the following definition, which was alluded to above.
\begin{definition}
\label{definition:qi}
A map $f:X\To Y$ between metric spaces $X$ and $Y$ is said to be an $n$-quasi isometric embedding if for any points $x_{1},x_{2}\in X$,
$$\frac{d(x_{1},x_{2})}{n}-n\leq d(f(x),f(y))\leq nd(x_{1},x_{2})+n.$$
It is said to be $n$-quasi surjective if the $n$ neighborhood of $f(X)$ equals all of $Y$.  We say that $f$ is a quasi isometry if (for some $n$) it is an $n$-quasi surjective $n$-quasi isometric embedding.
\end{definition}

Two spaces are said to be quasi isometric if there exists a quasi isometry between them, and it is easily seen that this is an equivalence relation. This equivalence relation is interesting for Cayley graphs, which may be metrized by taking each edge to have length $1$.  As mentioned above, if $S,S^{\prime}$ are finite generating sets for a group $G$, then the Cayley graph of $G$ with respect to $S$ is quasi isometric to the Cayley graph of $G$ with respect to $S^{\prime}$.  The following definition is the basis of the subject of geometric group theory.

\begin{definition}
\label{definition:qi2}
We say that finitely generated groups $G$ and $H$ are quasi isometric if their Cayley graphs are quasi isometric. If $\eta$ is an invariant of groups such that $\eta(G)=\eta(H)$ whenever $G$ and $H$ are quasi isometric, we say that $\eta$ is a QI invariant.
\end{definition}

As remarked above, the number of ends of a group is the prototypical QI invariant.  For another example, finite presentation is a QI invariant---if $G$ is finitely presented and $G$ is quasi isometric to $H$, then $H$ is finitely presented. We will prove that for finitely presented groups, having decidable domino problem is a QI invariant, as is having a strongly aperiodic subshift of finite type (at least under the hypothesis of torsion freeness).

\begin{theorem}
\label{theorem:mainthmdomino}
Let $G$ and $H$ be finitely presented groups with $G$ quasi isometric to $H$.  Then $G$ has decidable domino problem if and only if $H$ does.
\end{theorem}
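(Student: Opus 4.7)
The plan is to construct, symmetrically in $G$ and $H$, a computable reduction from the domino problem on $H$ to that on $G$; invariance then follows from the symmetry of quasi-isometry. Fix finite presentations of both groups and a quasi-isometry $\phi: G \To H$ with quasi-isometry constant $C$, and suppose we are given a finite set $\mathcal{F}_H$ of forbidden patterns over an alphabet $A$, defining an SFT $X_{\mathcal{F}_H}$. The reduction will produce a finite set $\mathcal{F}_G$ of forbidden patterns over $G$ whose SFT $X_G$ has alphabet recording, at each $g \in G$, two pieces of data: a ``displacement'' $\delta_g: S_G \To B_{H}(1_H, 2C)$, intended to encode $s \mapsto \phi(g)^{-1}\phi(gs)$, and a ``local picture'' $\pi_g: B_{H}(1_H, R) \To A$, intended to encode $h \mapsto \tau(\phi(g)h)$, where $R$ exceeds both $C$ and the diameter of every pattern in $\mathcal{F}_H$.

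The forbidden patterns of $\mathcal{F}_G$ will enforce four local conditions, each expressible as a finite check on a bounded ball in $G$. First, around every defining relator $r = s_1 \cdots s_\ell$ of $G$, the product $\delta_g(s_1)\,\delta_{gs_1}(s_2)\cdots\delta_{g s_1\cdots s_{\ell-1}}(s_\ell)$ must equal $1_H$; this check is finite precisely because $G$ is finitely presented. Second, for each generator $s \in S_G$, the pictures $\pi_g$ and $\pi_{gs}$ must agree on the appropriate overlap after being shifted by $\delta_g(s)$. Third, no translate of any pattern in $\mathcal{F}_H$ appears inside any $\pi_g$, which, with $R$ chosen as above, is a condition on $\pi_g$ alone. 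Fourth, for each generator $t \in S_H$ and each $g$, there exists a word $w$ in $S_G$ of length at most some fixed $N$ whose accumulated displacements from $g$ equal $t$.

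The forward direction is immediate: given $\tau \in X_{\mathcal{F}_H}$, setting $\delta_g(s) = \phi(g)^{-1}\phi(gs)$ and $\pi_g(h) = \tau(\phi(g)h)$ produces a configuration satisfying all four conditions. For the reverse, given $\sigma \in X_G$, the first condition together with the finite presentation of $G$ allows us to define unambiguously a map $\psi: G \To H$ by accumulating displacements along any word from $1_G$ to $g$. The fourth condition, combined with nonemptiness of $\psi(G)$, implies $\psi(G) \cdot S_H \subseteq \psi(G)$, which by connectedness of the Cayley graph of $H$ forces $\psi$ to be surjective. The second condition then lets us glue the local pictures $\pi_g$ into a single well-defined $\tau : H \To A$, and the third ensures $\tau \in X_{\mathcal{F}_H}$.

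The main obstacle is selecting the bound $N$ in the fourth condition: it must be a universal constant so that the condition is finitely patterned, yet large enough that satisfying the condition forces the induced $\psi$ to be surjective onto $H$. That such an $N$ exists follows from $\phi$ being a quasi-isometry, which provides a uniform bound on the $G$-distance needed to move through any specified displacement in $B_H(1_H, 2C)$; a secondary subtlety is verifying that the four local conditions, each witnessed on bounded patches, genuinely survive passage to a global configuration on $G$, which reduces to a standard compactness argument in $A^{G}$. Once $N$ is fixed, the passage from $\mathcal{F}_H$ to $\mathcal{F}_G$ is wholly explicit and computable, giving the desired reduction.
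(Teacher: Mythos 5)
Your reduction has the right general shape, and its first three conditions mirror the paper's construction (the relator condition on displacements is exactly the derivative subshift of Theorem \ref{theorem:derivativeshift}, and the local pictures play the role of $f^*(B_n\sigma)$ in Lemma \ref{lemma:pullbackshift}), but there is a genuine gap in the reverse direction: nothing in your four conditions certifies that $\psi$ is a quasi-isometric \emph{embedding}. Conditions (1) and (4) only force $\psi$ to be a Lipschitz surjection. The gluing step then fails: to define $\tau(p)$ you need $\pi_{g_1}(h_1)=\pi_{g_2}(h_2)$ whenever $\psi(g_1)h_1=\psi(g_2)h_2=p$, and condition (2) only gives agreement of pictures at \emph{adjacent} vertices of $G$. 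Chaining condition (2) along a path from $g_1$ to $g_2$ does not propagate the value at $p$, because the images $\psi(x_i)$ of the intermediate vertices may wander farther than $R$ from $p$; the point $p$ then leaves the domains of the intermediate pictures and its value may change before the path returns. If $\psi$ were a quasi-isometry one could take a geodesic from $g_1$ to $g_2$, bound its length by the (uniformly bounded) diameter of $\psi^{-1}(B(p,R))$, and choose $R$ large enough that the geodesic's image stays within range of $p$; but without properness $\psi^{-1}(B(p,R))$ can have unbounded diameter, the required consistency is not a local condition on $G$, and your SFT can be nonempty (a ``folded'' Lipschitz surjection whose sheets carry mutually inconsistent partial tilings, each locally legal) while $X_{\mathcal{F}_H}$ is empty. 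This is precisely the problem the paper's Lemma \ref{lemma:qipairshift} is built to solve: the subshift alphabet additionally records the local values $\lFf$ of a two-sided quasi-inverse $F:H\To G$, and it is this data that turns the equality $f(g_1)k_1=f(g_2)k_2$ into the uniform bound $d(g_1,g_2)\le 2n^2+2n$ used in Lemma \ref{lemma:pullbackshift} to make the consistency rule local. Your approach needs an analogous certificate of properness carried in the alphabet.

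A secondary omission: computing $\mathcal{F}_G$ from $\mathcal{F}_H$ (enumerating $B_H(1_H,R)$ as a set of group elements, deciding which displacement words accumulate to a given $t\in S_H$, etc.) requires a decidable word problem in $G$ and $H$, which finite presentation alone does not supply. The paper disposes of this by noting that an undecidable word problem already forces an undecidable domino problem on both groups, so one may assume decidability; your ``wholly explicit and computable'' claim needs the same preliminary reduction.
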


\begin{theorem}
\label{theorem:mainthmstrong}
Let $G$ and $H$ be torsion free finitely presented groups, and suppose that $G$ is quasi isometric to $H$.  Then $G$ admits a strongly aperiodic subshift of finite type if and only if $H$ does.
\end{theorem}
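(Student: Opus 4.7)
The plan is to transfer a strongly aperiodic SFT across the quasi-isometry. By symmetry, assume $X \subset A^{G}$ is a strongly aperiodic SFT and $\phi : H \to G$ is an $n$-quasi-isometry with $\phi(1_{H}) = 1_{G}$; the goal is to build a strongly aperiodic SFT $Y \subset B^{H}$. The alphabet $B$ will encode, at each $h \in H$, a local patch of $\phi$ near $h$ together with the $A$-values of some $\sigma \in X$ near $\phi(h)$, and the SFT rules will force these local patches to glue into a global quasi-isometry and a global valid configuration of $X$.

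Concretely, fix a radius $R$ exceeding $n$ and the diameter of every forbidden pattern of $X$. Take $B$ to be the finite set of triples $(\psi, T, a)$ where $\psi : B_{H}(1_{H},R) \to B_{G}(1_{G},nR+n)$ is an $n$-quasi-isometric embedding with $\psi(1_{H}) = 1_{G}$, $T : B_{G}(1_{G},nR+n) \to A$ is an $A$-labeling, and $a \in A$. A configuration $y(h) = (\psi_{h}, T_{h}, a_{h})$ is declared valid exactly when: (i) $a_{hs} = T_{h}(\psi_{h}(s))$ for every $s$ in a fixed finite generating set $S$ of $H$; (ii) for $s \in S$, the patches $(\psi_{h},T_{h})$ and $(\psi_{hs},T_{hs})$ agree on their overlap after translation by $\psi_{h}(s)$; (iii) for every defining relator $r = s_{1}\cdots s_{k}$ of a finite presentation of $H$, the product $\psi_{h}(s_{1}) \psi_{hs_{1}}(s_{2}) \cdots \psi_{h s_{1}\cdots s_{k-1}}(s_{k})$ equals $1_{G}$; and (iv) no $T_{h}$ contains a forbidden pattern of $X$. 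Finite presentability of $H$ makes (iii) a finite list of forbidden patterns, so $Y$ is an honest SFT, and setting $\psi_{h}(g) := \phi(h)^{-1}\phi(hg)$, $T_{h}(k) := \sigma(\phi(h)k)$, $a_{h} := \sigma(\phi(h))$ for any $\sigma \in X$ gives a valid configuration, so $Y$ is nonempty.

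For strong aperiodicity, suppose $y \cdot h_{0} = y$ with $h_{0} \neq 1_{H}$. Conditions (ii) and (iii) allow the local patches to be glued into a global $n$-quasi-isometry $\tilde\phi : H \to G$ with $\tilde\phi(1_{H}) = 1_{G}$ and $\psi_{h}(g) = \tilde\phi(h)^{-1}\tilde\phi(hg)$, and the $T_{h}$'s and $a_{h}$'s assemble into a $\sigma \in X$ with $T_{h}(k) = \sigma(\tilde\phi(h)k)$. The equation $\psi_{h_{0}h} = \psi_{h}$ translates to $\tilde\phi(h_{0}hg)\tilde\phi(hg)^{-1} = \tilde\phi(h_{0}h)\tilde\phi(h)^{-1}$ for every $h \in H$ and $g \in B_{H}(1_{H},R)$, so the map $h \mapsto \tilde\phi(h_{0}h)\tilde\phi(h)^{-1}$ is invariant under right multiplication by any generator and is therefore constant on $H$; its common value is $g_{0} := \tilde\phi(h_{0})$, giving $\tilde\phi(h_{0}h) = g_{0}\tilde\phi(h)$ for every $h$. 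Combining with $T_{h_{0}h} = T_{h}$ yields $\sigma(g_{0}\tilde\phi(h)k) = \sigma(\tilde\phi(h)k)$ for all $h \in H$ and $k \in B_{G}(1_{G},nR+n)$; since the products $\tilde\phi(h)k$ cover all of $G$ by coarse surjectivity of $\tilde\phi$, we conclude $g_{0} \in \Stab_{G}(\sigma)$.

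The main obstacle is to rule out $g_{0} = 1_{G}$, and this is exactly where torsion-freeness of $H$ enters. Iterating the identity $\tilde\phi(h_{0}h) = g_{0}\tilde\phi(h)$ yields $\tilde\phi(h_{0}^{k}) = g_{0}^{k}$ for every $k \in \ZZ$. Because $H$ is torsion-free and $h_{0} \neq 1_{H}$, the element $h_{0}$ has infinite order, so $|h_{0}^{k}|_{H} \to \infty$ as $|k| \to \infty$; since $\tilde\phi$ is an $n$-quasi-isometric embedding, $|g_{0}^{k}|_{G}$ must also tend to infinity, forcing $g_{0}$ to have infinite order and in particular $g_{0} \neq 1_{G}$. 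Thus $\sigma \in X$ has a nontrivial stabilizer, contradicting the strong aperiodicity of $X$.
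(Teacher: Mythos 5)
Your overall architecture is the same as the paper's: encode on $H$ the derivative of a quasi-isometry $\phi$ together with a window of a configuration of $X$ pulled back along $\phi$, use finite presentability to turn ``is locally a derivative'' into a finite list of forbidden patterns, and then show that a period $h_{0}$ of a configuration of $Y$ forces $\tilde\phi(h_{0}h)=g_{0}\tilde\phi(h)$ and hence produces a period $g_{0}$ of $\sigma$. However, there is a genuine gap where you pass from local data to a global quasi-isometry and a global configuration of $X$. Your conditions (i)--(iv) only require each patch $\psi_{h}$ to be an $n$-quasi-isometric embedding of the $R$-ball; this does not imply that the glued map $\tilde\phi$ is a global quasi-isometric embedding (local-to-global fails for quasi-isometric embeddings in general groups --- e.g.\ a discretized circle or hairpin of huge turning radius in $\ZZ^{2}$ is locally almost geodesic on scale $R$ but globally folds back on itself), and nothing in your rules enforces coarse surjectivity of $\tilde\phi$. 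Both properties are used essentially: coarse surjectivity is needed for the $T_{h}$'s to determine $\sigma$ on all of $G$ and to conclude $g_{0}\in\Stab_{G}(\sigma)$, and the global lower quasi-isometry bound is needed for $|g_{0}^{k}|_{G}\to\infty$. Worse, the claim that the $T_{h}$'s ``assemble into a $\sigma\in X$'' fails as stated: condition (ii) only forces agreement of patches at $h$ and $hs$ for $s\in S$, but if $\tilde\phi$ is not coarsely injective, two points $h,h'$ far apart in $H$ can have overlapping patches $B_{G}(\tilde\phi(h),nR+n)\cap B_{G}(\tilde\phi(h'),nR+n)\neq\emptyset$ on which $T_{h}$ and $T_{h'}$ disagree, and demanding agreement for all such pairs is not a finite-range condition.

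This is precisely the difficulty the paper flags (``it is far from clear that one can verify any sort of quasi injectivity or properness from the local behavior of $df$'') and resolves by enlarging the alphabet to also record the local behavior of a two-sided $n$-quasi-inverse (the component $\ell_{Ff}$ in Lemma \ref{lemma:qipairshift}). With the quasi-inverse present, $\tilde\phi(h)k=\tilde\phi(h')k'$ forces $d(h,h')\leq 2n^{2}+2n$, so consistency of overlapping $A$-patches becomes a bona fide local rule, and quasi-injectivity, properness, and quasi-surjectivity all become locally certifiable. To repair your proof you would need to add this quasi-inverse data (or an equivalent local certificate of global injectivity and surjectivity) to your alphabet; the rest of your argument, including the use of torsion-freeness of the target group at the end, then goes through and matches Corollary \ref{corollary:strongaperiodicity}.
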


\paragraph{Remark.} We note that Carroll and Penland have shown independently that having a strongly aperiodic subshift of finite type is a commensurability invariant \cite{carpen}, even without assuming torsion freeness or finite presentation.  Two groups $G$ and $H$ are said to be commensurable if some finite index subgroup of $G$ is isomorphic to some finite index subgroup of $H$. If $G$ and $H$ are commensurable, then they are quasi isometric to each other, but there are many examples of pairs of groups which are quasi isometric but not commensurable.\\

With regards to weak aperiodicity, we prove an even stronger result. A set of groups $\mathcal{S}$ is said to be QI-rigid if whenever a group $G$ is quasi isometric to some element of $\mathcal{S}$, then $G$ is actually commensurable with some element of $\mathcal{S}$. We will show that if $G$ is finitely presented with no weakly aperiodic subshift of finite type, then the singleton set $\{G\}$ is QI-rigid, so long as $G$ is torsion free.

\begin{theorem}
\label{theorem:mainthmweak}
If $G$ is a finitely presented group with no weakly aperiodic subshift of finite type, and $H$ is quasi isometric to $G$, then there exist finite index subgroups $G_0\subset G$ and $H_0\subset H$ such that $H_0$ is isomorphic to the quotient of $G_0$ by a finite group. 
\end{theorem}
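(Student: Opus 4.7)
The plan is to construct from the given quasi-isometry $f: G \to H$ a non-empty SFT $Y \subset A^G$ whose letters encode the local data of a pair of mutually quasi-inverse maps $\phi: G \to H$ and $\psi: H \to G$. Since $G$ admits no weakly aperiodic SFT, some $\tau_0 \in Y$ must have stabilizer $G_0 := \Stab_G(\tau_0)$ of finite index in $G$. From $\tau_0$ one extracts a homomorphism $c: G_0 \to H$ with finite kernel and finite-index image $H_0$; the first isomorphism theorem then gives $G_0/\ker c \cong H_0$, which is exactly the conclusion.

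First I would construct $Y$ as follows. Fix a radius $R$ and an integer $n$ for which $f$ is an $n$-quasi-isometry with $n$-quasi-inverse $\bar f$. The alphabet $A$ consists of pairs $(\phi_{\mathrm{loc}}, \psi_{\mathrm{loc}})$ where $\phi_{\mathrm{loc}}: B_G(R,1_G)\to H$ and $\psi_{\mathrm{loc}}: B_H(R,1_H)\to G$ are based maps satisfying the $n$-QI estimate, with $\psi_{\mathrm{loc}}\circ\phi_{\mathrm{loc}}$ within the identity's $n$-neighborhood on an appropriate subdomain (and symmetrically). A configuration $\tau: G \to A$ is constrained by forbidden patterns of two kinds: (i) consistency between neighboring labels so that the increments $\tau(g)(s)$ form an $H$-valued cocycle, and likewise for the $\psi$-data; and (ii) for each relator $s_1\cdots s_k = 1_G$ in a finite presentation of $G$, the loop condition $\tau(g)(s_1)\tau(gs_1)(s_2)\cdots\tau(gs_1\cdots s_{k-1})(s_k) = 1_H$. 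Finite presentability of $G$ is what makes (ii) a finite list. The configuration $\tau_f$ built from $f$ and $\bar f$ witnesses non-emptiness of $Y$.

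Next, given $\tau_0 \in Y$ with finite-index stabilizer $G_0$, I would integrate the cocycle from the basepoint to obtain $\phi_0: G \to H$ (and likewise $\psi_0: H \to G$) with $\phi_0(1_G) = 1_H$. The invariance $\tau_0(g_0 g) = \tau_0(g)$ for all $g_0 \in G_0$ translates, by integrating along a path $1_G \to g_0 \to g_0 g$ and using $\tau_0(g_0 \cdot x) = \tau_0(x)$, to the identity $\phi_0(g_0 g) = \phi_0(g_0)\phi_0(g)$ for every $g \in G$; setting $c(g_0) := \phi_0(g_0)$ thus defines a homomorphism $c: G_0 \to H$. Coarse injectivity of $\phi_0$, forced by the $n$-QI alphabet constraints and the coarse-inverse relation with $\psi_0$, makes $\phi_0^{-1}(1_H)$ bounded and hence $\ker c$ finite. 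Coarse surjectivity of $\phi_0$ (also from $\psi_0$), combined with the decomposition $\phi_0(G) = \bigcup_\gamma c(G_0)\phi_0(\gamma)$ over the finitely many coset representatives $\gamma$ of $G_0$ in $G$, makes $c(G_0)$ coarsely dense in $H$ and hence a finite-index subgroup $H_0$.

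The main technical obstacle is ensuring that the local QI and inverse data, enforceable as finitely many forbidden patterns, really do propagate to make the integrated maps $\phi_0$ and $\psi_0$ honest global quasi-isometries rather than just locally controlled ones. This requires choosing $R$ large enough that the local coarse-inverse condition, together with the cocycle conditions coming from the finite presentation, yields both global coarse injectivity and coarse surjectivity of $\phi_0$. The finite presentability of $G$ is used in an essential way to make the cocycle closure conditions finitely expressible, while the built-in symmetry between $\phi_{\mathrm{loc}}$ and $\psi_{\mathrm{loc}}$ in the alphabet is what converts the stabilizer condition on $\tau_0$ into the clean algebraic conclusion.
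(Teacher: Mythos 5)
Your proposal follows essentially the same route as the paper: your SFT $Y$, whose letters record the increments of an $n$-Lipschitz quasi-isometry together with the local data of a quasi-inverse, is exactly the paper's subshift $\{(df,\lFf):(f,F)\in\QIP_n(G,H)\}$, and your passage from a finite-index stabilizer to a homomorphism $c$ with finite kernel and finite-index image is the paper's normalization $f(1_G)=1_H$ followed by its periodicity proposition. The one point you flag but leave unresolved---that the locally enforced quasi-inverse data must integrate to an honest global quasi-inverse $F:H\To G$, which is what gives the lower quasi-isometry bound and hence finiteness of $\ker c$---is handled in the paper by a covering-space argument over the Cayley graph of $H$ (every relator of $H$ lifts to a loop, so the candidate graph of $F$ is single-valued), and this step uses the finite presentation of $H$ (inherited from $G$ by QI-invariance), not only the relators of $G$ as your sketch suggests.
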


\subsection{Organization.}
\label{subsection:outline}
The paper is organized as follows. Section \ref{section:ends} gives the proof of Theorem \ref{theorem:mainthmends}. Section \ref{section:dshift} defines the notion of the derivative of an $n$-Lipschitz function on a finitely generated group, which will be crucial in proving Theorems \ref{theorem:mainthmweak}, \ref{theorem:mainthmstrong}, and \ref{theorem:mainthmdomino}. In this section, we prove Theorem \ref{theorem:derivativeshift}, which states that the collection of derivatives of $n$-Lipschitz functions on a finitely presented group forms a subshift of finite type.

Section \ref{section:qi} will show that there is a subshift of finite type parameterizing certain quasi isometries between two groups $G$ and $H$ (Lemma \ref{lemma:qipairshift}). Theorem \ref{theorem:mainthmweak} will follow as Corollary \ref{corollary:weakaperiodicity}. Section \ref{section:pullbacks} will describe how to construct a subshift on a group $G$ parameterizing pairs $(f,\sigma)$ where $f:G\To H$ is a quasi isometry and $\sigma$ is a configuration of some subshift of finite type $X\subset A^{H}$ (Lemma \ref{lemma:pullbackshift}).  Theorems \ref{theorem:mainthmstrong} and \ref{theorem:mainthmdomino} will follow as Corollaries \ref{corollary:strongaperiodicity} and \ref{corollary:dominoproblem}.

\subsection{Notation.}
Throughout this paper, a finitely generated group will be equipped with the word metric with respect to some fixed generating set (the metric inherited from the Cayley graph). Hence, when we say that a function $f:G\To H$ between two groups is a quasi isometry or Lipschitz map, we mean that it has the given property with respect to these word metrics. If $g$ is an element of a group $G$, then $B(n,g)$ will denote the $n$-ball around $g$ in $G$---i.e., the set of $g'\in G$ such that $d(g,g')\leq n$. Similarly, if $F$ is a subset of $G$, then $\Nn_{m}F$ will denote the $n$-neighborhood of $F$---i.e., the set of all $g\in G$ such that there exists $f\in F$ with $d(g,f)\leq n$. Finally, the identity element of a group $G$ will be denoted as $1_{G}$, or just $1$ when there is no possibility of confusion.

\paragraph{Acknowledgments.} We wish to thank Andy Putman for his guidance, Ay\c{s}e \c{S}ahin for discussing her work with us, and Ilya Kapovich for his thoughtful comments on early drafts of this paper. We also wish to thank Andrew Penland for explaining his results to us, and Danijela Damjanovich for hosting the 2014 Rice Dynamics Meeting. We especially wish to thank Tullio Ceccherini-Silberstein for suggesting that we should be able to prove Theorem \ref{theorem:mainthmdomino} and Yves Cornulier for pointing out some counterexamples to a conjecture which appeared in an earlier version of this paper. We also wish to thank the organizers of the conference ``Growth, Symbolic Dynamics and Combinatorics of Words", supported by ERC starting grant 257110 ``RaWG". This work has been supported by NSF award 1148609.

\section{Groups with at least two ends do not admit a strongly aperiodic subshift of finite type.}
\label{section:ends}
We now prove theorem \ref{theorem:mainthmends}. 
\begin{theorem}
\label{theorem:ends}
Let $G$ be a finitely generated group with at least $2$ ends.  Let $X\subset A^{G}$ be a nonempty subshift of finite type.  Then there exists $\sigma\in X$ and $g\in G$ not equal to $1_{G}$ such that $\sigma\cdot g=\sigma$.
\end{theorem}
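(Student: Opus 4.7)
The plan is to adapt the classical argument that every nonempty SFT over $\ZZ$ contains a periodic point. That argument starts from any $\sigma\in X$, applies the pigeonhole principle to the restrictions $(\sigma\cdot n)|_{[-N,N]}$ for $n\in\NN$ to find two identical windows, and then copies the piece of $\sigma$ between them periodically to produce an element of $X$ fixed by a nontrivial translation; this works provided $N$ exceeds the radius of the forbidden patterns. For a group with at least two ends, the $\ZZ$-indexed sequence of separating intervals gets replaced by a $\ZZ$-indexed family of disjoint separating balls arranged along the powers of a carefully chosen infinite-order element $g$.

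Fix $r$ so that every forbidden pattern defining $X$ has support inside $B(r,1_G)$, and choose $N>r$ large enough that $G\smin B(N,1_G)$ has at least two unbounded components. The crucial geometric input is an infinite-order element $g\in G$ such that, setting $K_j=g^jB(N,1_G)$, the balls $\{K_j\}_{j\in\ZZ}$ are pairwise disjoint and linearly ordered as separators of $G$: for each $j$, all later $K_{j+1},K_{j+2},\ldots$ lie in one unbounded component of $G\smin K_j$ and all earlier ones in another. Such a $g$ is produced via Stallings' theorem: since $G$ has at least two ends it splits nontrivially over a finite subgroup, so it acts on a Bass--Serre tree with finite edge stabilizers, and any element acting hyperbolically on that tree (raised to a large enough power) works.

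With $g$ in hand, pick any $\sigma\in X$. The restrictions $(\sigma\cdot g^k)|_{B(N,1_G)}$ for $k\in\NN$ take values in the finite set $A^{B(N,1_G)}$, so by pigeonhole there exist $0\leq i<j$ with $(\sigma\cdot g^i)|_{B(N,1_G)}=(\sigma\cdot g^j)|_{B(N,1_G)}$. Setting $h=g^{j-i}$, this identity rewrites as $\sigma(y)=\sigma(hy)$ for every $y\in K_i$. Let $D$ be the half-closed strip consisting of $K_i$ together with those components of $G\smin\bigcup_k K_k$ that lie between $K_i$ and $K_j$ in the ordering; by $h$-equivariance, $\{h^m D\}_{m\in\ZZ}$ partitions $G$. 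Define $\tau\in A^G$ by $\tau(y)=\sigma(h^{-m}y)$, where $m$ is the unique integer with $h^{-m}y\in D$. Then $\tau\cdot h=\tau$. To check $\tau\in X$, note that on each translate $h^m D$ the function $\tau$ equals $\sigma\cdot h^{-m}$, while on the adjacent ball $h^{m+1}K_i$ the definition gives $\sigma\cdot h^{-(m+1)}$, which coincides with $\sigma\cdot h^{-m}$ on that ball by the pigeonhole identity applied to $h^{-(m+1)}y\in K_i$. Hence $\tau$ agrees with $\sigma\cdot h^{-m}$ on the extended closed strip $h^m D\cup h^{m+1}K_i$. Because $N>r$ and the $K_j$'s are $N$-balls, every ball $B(r,y)$ in $G$ fits inside one such extended strip, so any forbidden pattern occurring in $\tau$ would already appear in a translate of $\sigma$, contradicting $\sigma\in X$.

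The main obstacle is the geometric one in the second paragraph: producing the cut element $g$ whose powers yield a $\ZZ$-chain of separating balls satisfying both the ordering property and the ``thick wall'' property needed for the verification step. Stallings' theorem secures existence, but some care is required to confirm that taking a large enough power of a hyperbolic element on the Bass--Serre tree not only disjoins the translated balls but also arranges their Cayley-graph complement so that the fundamental strip $D$ is well-defined and the gluing preserves forbidden-pattern-freeness. Once this geometric input is in place, the pigeonhole and gluing steps are a direct generalization of the $\ZZ$ argument.
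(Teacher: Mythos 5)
Your overall strategy matches the paper's: build a $\ZZ$-chain of disjoint separating balls along powers of a suitable element, pigeonhole two matching windows, and tile a fundamental strip periodically. But there is a genuine gap in the verification step, precisely at the sentence ``Because $N>r$ and the $K_j$'s are $N$-balls, every ball $B(r,y)$ in $G$ fits inside one such extended strip.'' This is true in $\ZZ$ (an $r$-ball cannot exit an interval of radius $N>r$ on both sides) but false in general Cayley graphs: the complement of $B(N,h^mg^i)$ can have distinct unbounded components whose closures are within distance $2$ of each other \emph{through} the ball (in a free group $\la a,b\ra$, the points $a^{N+1}$ and $a^{N}b$ lie in different components of the complement of $B(N,1)$ yet are at distance $2$). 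Hence an $r$-ball centered at or near a point of $h^mK_i$ can simultaneously meet components belonging to $h^{m-1}D$ and components belonging to $h^mD$; on those two pieces $\tau$ is defined via the two different translates $\sigma\cdot h^{-(m-1)}$ and $\sigma\cdot h^{-m}$, which you only know agree on $h^mK_i$ itself. No single translate of $\sigma$ then covers $B(r,y)$, and the forbidden-pattern check breaks. The fix---which is exactly what the paper does---is to separate with radius-$N$ balls but run the pigeonhole on the concentric radius-$2N$ balls $g^kB(2N,1_G)$, chosen pairwise disjoint by passing to a power of the axial element: the two translates of $\sigma$ then agree on a full $B(2N,\cdot)$ neighborhood of each wall, which absorbs every $r$-ball that straddles it. You should also state explicitly that $D$ contains the components of the complement touching \emph{only} $K_i$, not just those between $K_i$ and $K_j$; otherwise $\{h^mD\}$ is not a partition of $G$.

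A secondary difference of route: you obtain the cut element from Stallings' theorem and a hyperbolic isometry of the Bass--Serre tree. The paper avoids this machinery entirely: it picks $x,y$ of norm greater than $2n$ lying in distinct unbounded components of $G\smin B(n,1_G)$ and shows directly, via an elementary inductive separation lemma and the symmetry of the generating set, that $x^{-1}y$ is ``$n$-axial,'' i.e.\ $g^bB$ separates $g^aB$ from $g^cB$ for all $a<b<c$. Your route can be made to work, but converting separation in the tree into separation by balls in the Cayley graph requires about as much care as the paper's direct argument, so it buys little here.
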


\paragraph{An example: $G=\mathbb{Z}$.} We begin by illustrating the proof in a special case (see Figure \ref{figure:z}).  Assume $G=\mathbb{Z}$ and $X\subset A^{G}$ a non empty subshift of finite type. Suppose that $X$ is defined on $B=\BnG$, meaning that to determine whether $\sigma\in A^{G}$ is an element of $X$, we just need to check that the set
$$\{\sigma\cdot g|_{B}:g\in\ZZ\}$$
contains no forbidden pattern. Since we assumed that $X$ is nonempty, there exists $\sigma_{0}\in X$.  We observe that there must exist $m_{1},m_{2}\in \ZZ$ such that $m_{2}-m_{1}>2n$ and $\sigma_{0}\cdot m_{1}|_{B}=\sigma_{0}\cdot m_{2}|_{B}$. We will find $\sigma\in X$ such that $\sigma$ is $m_{2}-m_{1}$-periodic, meaning that $\sigma\cdot (m_{2}-m_{1})=\sigma$.

Let $\fS=\{m_{1}-n,m_{1}-n+1,\ldots,m_{2}-n-1\}$ and $\fSp=\fS\cup (m_{2}+B)$.  Let $\frm:G\rightarrow\fS$ be specified by $\frm(x)\equiv x$ mod $m_{2}-m_{1}$ for every $x\in G$. We define $\sigma$ to be $x\mapsto \sigma_{0}(\frm(x))$. Manifestly, $\sigma$ is $m_{2}-m_{1}$-periodic. To show that $\sigma$ is in $X$, we start with the following observations.
\begin{itemize}
\item[(a)] On $\fSp$, the functions $\sigma$ and $\omo$ agree. To see this, note that if $x\in \fS$, then $\sigma(x)=\omo(x)$ by definition, and if $x\in \fSp\smin \fS$, then $x\in m_{2}+B$, so $\frm(x)=x-(m_{2}-m_{1})$ and
$$\sigma(x)=\omo(x-(m_{2}-m_{1}))=\omo(x)$$
by our assumption that $\omo\cdot m_{1}|_{B}=\omo\cdot m_{2}|_{B}$.
\item[(b)] For all $x\in G$, there exists some $k\in \ZZ$ such that
$$k(m_{2}-m_{1})+x+B\subset \fSp.$$
This follows from the fact that $\fSp$ contains the $n$-neighborhood of $\{m_{1},m_{1}+1,\ldots,m_{2}-1\}$ which is a complete set of coset representatives mod $m_{2}-m_{1}$. 
\end{itemize}

We now show $\sigma\in X$. Given $x\in G$, choose $k$ as in (ii) above. Then by periodicity and (i),
$$\sigma\cdot x|_{B}=\sigma\cdot (k(m_{2}-m_{1})+x+B)|_{B}
=\omo\cdot(k(m_{2}-m_{1})+x)|_{B}.$$
It follows that $\sigma\in X$.

\begin{figure}[t]
\labellist
\small\hair 2pt

\pinlabel $m_{1}+B$ at 60 139
\pinlabel $m_{2}+B$ at 204 139
\pinlabel $\fS$ at 104 91
\pinlabel $\omo :$ at -16 115
\pinlabel $\sigma :$ at -16 4


\endlabellist

\centering
\centerline{\psfig{file=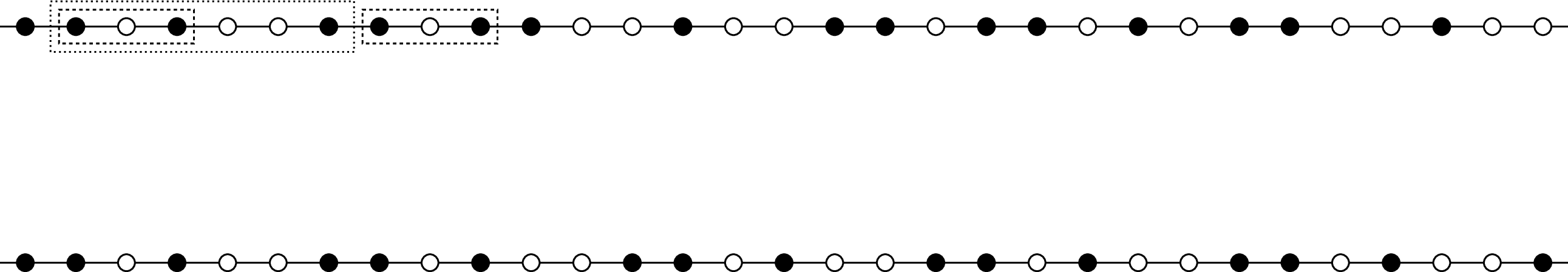,scale=60}}
\caption{Given $\sigma_{0}$, a point of $X$, a subshift of finite type on $\ZZ$, one can construct a periodic point $\sigma$ of $X$ by repeating the pattern found in $\omo$ between two balls $m_{1}+B$ and $m_{2}+B$ on which $\omo$ has the same behavior.}
\label{figure:z}
\end{figure}

\paragraph{The general case.} From here on, we will assume that $G$ is a group with at least $2$ ends, so that for $n$ sufficiently large, $\GmS\smin|\BnG|$ has at least $2$ unbounded connected components. 
The following definition will be crucial.

\begin{definition}
\label{definition:separation}
Let $B_{0},B_{1},B_{2}$ be finite subsets of $G$ such that each $|B_{i}|$ is connected. We say that $B_{1}$ separates $B_{0}$ from $B_{2}$ when $B_{1}$ and $B_{2}$ lie in distinct connected components of $\GmS\setminus |B_{0}|$.
\end{definition}

For example, in $\ZZ$, $b+B(n,1)$ separates $a+B(n,1)$ from $c+B(n,1)$ when $c-b>2n$ and $b-a>2n$.  The following lemma encodes some trivial observations about separation.
\begin{lemma}
\label{lemma:separation}
Suppose that $B_{1}$ separates $B_{0}$ from $B_{2}$.
\begin{itemize}
\item[(a)] If $g\in G$, then $gB_{1}$ separates $gB_{0}$ from $gB_{2}$.
\item[(b)] If $\mathcal{C}$ is an unbounded component of
$$\GmS\smin (|B_{0}|\cup|B_{1}|\cup|B_{2}|),$$
then $\mathcal{C}$ cannot touch both $B_{0}$ and $B_{2}$.
\item[(c)] If we also know that $B_{2}$ separates $B_{1}$ from some finite $B_{3}$, then it follows that $B_{0}$ and $B_{3}$ are separated by $B_{i}$ if $i$ is $1$ or $2$.
\end{itemize}
\end{lemma}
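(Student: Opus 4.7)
My plan is to handle (a) and (b) by direct graph-theoretic observations and to treat (c) with a short combinatorial argument based on geodesic paths. Part (a) is immediate: left multiplication by any $g\in G$ is a graph automorphism of $\GmS$, so it carries connected components of $\GmS\smin|B_{1}|$ bijectively onto those of $\GmS\smin|gB_{1}|$ while sending $B_{0},B_{2}$ to $gB_{0},gB_{2}$; the distinction between the relevant components is therefore preserved. For (b), I will suppose for contradiction that an unbounded component $\mathcal{C}$ has neighbors in both $B_{0}$ and $B_{2}$ and concatenate a path inside $\mathcal{C}$ with the two incident boundary edges to obtain a walk from $B_{0}$ to $B_{2}$ whose interior avoids $|B_{1}|$. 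Since $B_{0}$ and $B_{2}$ are connected and disjoint from $B_{1}$, this would place $B_{0}$ and $B_{2}$ in a single component of $\GmS\smin|B_{1}|$, contradicting the separation hypothesis. The unboundedness of $\mathcal{C}$ is not actually used here.

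Part (c) is the main content. I will first show that $B_{2}$ separates $B_{0}$ from $B_{3}$, and then use this to deduce the analogous statement for $B_{1}$. Suppose for contradiction that $B_{0}$ and $B_{3}$ lie in the same component of $\GmS\smin|B_{2}|$. Since $B_{2}$ separates $B_{1}$ from $B_{3}$ by hypothesis, the connected set $B_{1}$ then lies in a component of $\GmS\smin|B_{2}|$ distinct from the one containing $B_{0}$. Take a shortest path $\pi$ in $\GmS$ from a vertex of $B_{0}$ to a vertex of $B_{1}$; since its endpoints are in different components of $\GmS\smin|B_{2}|$, $\pi$ must cross $|B_{2}|$ at some step $k$. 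The prefix of $\pi$ ending at step $k$ is a path from $B_{0}$ to $B_{2}$, and by the hypothesis that $B_{1}$ separates $B_{0}$ from $B_{2}$, it must meet $|B_{1}|$ at some earlier step $j<k$. But then the prefix of $\pi$ of length $j$ is a strictly shorter path from $B_{0}$ to $B_{1}$, contradicting the minimality of $\pi$.

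The corresponding claim for $B_{1}$ then follows quickly from what has just been established: if there were a path $\gamma$ from $B_{0}$ to $B_{3}$ in $\GmS\smin|B_{1}|$, then $\gamma$ would lie in the $B_{0}$-component of $\GmS\smin|B_{1}|$, which by hypothesis does not meet $B_{2}$; hence $\gamma$ would also avoid $|B_{2}|$, contradicting the fact that $B_{2}$ separates $B_{0}$ from $B_{3}$. The only genuinely non-routine step is the geodesic-swapping argument above, which rests on the principle that a shortest path cannot cross two separators in the ``wrong'' order. Throughout I am implicitly assuming that the $B_{i}$ are pairwise disjoint, which is the natural setting for the intended application to translates of large balls in $\GmS$.
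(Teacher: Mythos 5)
Your proofs of (a) and (b) coincide with the paper's. For (c) you take a genuinely different (though equally elementary) route. The paper argues directly that $B_{2}$ and $B_{3}$ lie in a common component of $\GmS\smin|B_{1}|$ --- any path from $B_{3}$ to $B_{1}$ must hit $B_{2}$, and its truncation at the first such point avoids $B_{1}$ --- so the claim for $B_{1}$ is immediate, and the claim for $B_{2}$ follows from the symmetry $(B_{0},B_{1},B_{2},B_{3})\mapsto(B_{3},B_{2},B_{1},B_{0})$ of the hypotheses. You instead prove the $B_{2}$ claim first, by contradiction: a shortest $B_{0}$--$B_{1}$ path would have to cross $B_{2}$ before $B_{1}$ while the separation hypothesis forces it to cross $B_{1}$ first, violating minimality; you then deduce the $B_{1}$ claim by observing that a $B_{0}$--$B_{3}$ path avoiding $|B_{1}|$ stays in the $B_{0}$-component of $\GmS\smin|B_{1}|$ and hence misses $B_{2}$. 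Both arguments are correct and of comparable length; yours makes explicit a truncation/minimality step that the paper leaves implicit, at the cost of routing one conclusion through the other rather than appealing to symmetry. One small remark: the pairwise disjointness you flag as an implicit assumption is not an extra hypothesis --- it already follows from the separation hypotheses, since a set lying in a component of $\GmS\smin|B_{i}|$ is disjoint from $B_{i}$, and connected sets lying in distinct components are disjoint from each other.
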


\begin{proof}
Part (a) is trivial.

To see part (b), observe that if $\mathcal{C}$ were an unbounded component which touched both $B_{0}$ and $B_{2}$, then we could find a path in $\mathcal{C}$ joining a vertex of $B_{0}$ to a vertex of $B_{2}$.  Hence, $B_{0}$ and $B_{2}$ would lie in the same connected component of $\GmS\smin|B_{1}|$ (whichever one contains $\mathcal{C}$,) contrary to the definition.

To obtain part (c), we reason as follows. Because $\GmS$ is connected, there exists a path in $\GmS$ from $B_{3}$ to $B_{1}$, but any such path must go through $B_{2}$ because $B_{2}$ separates $B_{3}$ from $B_{1}$. Hence, $B_{2}$ and $B_{3}$ are in the same connected component of $\GmS\smin |B_{1}|$, and therefore $B_{3}$ and $B_{0}$ are in different connected components of $\GmS\smin|B_{1}|$ as desired since $B_{1}$ separates $B_{0}$ from $B_{2}$. The same argument shows that $B_{2}$ separates $B_{0}$ from $B_{3}$.
\end{proof}

We now define the notion of an $n$-axial element $g\in G$. Intuitively (if not in reality,) left multiplication by such an element drags the Cayley graph of $G$ along some axis.


\begin{definition}
\label{definition:naxial}
Let $n$ be a natural number. We say that $g\in G$ is $n$-axial if, for all integers $a<b<c$, we have that $g^{b}\BnG$ separates $g^{a}\BnG$ from $g^{c}\BnG$.
\end{definition}

In $\ZZ$, an element $g$ is $n$-axial exactly when it has absolute value greater than $2n$. We now prove that every group with at least two ends has an $n$-axial element for sufficiently large $n$.

\begin{lemma}
\label{lemma:naxialexistence}
Under our standing assumption that $G$ is a finitely generated group with at least $2$ ends, there exists some $N_{G}$ such that for any $n\geq N_{G}$, there exists an $n$-axial $g\in G$.
\end{lemma}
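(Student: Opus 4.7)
The plan is to find an element $g\in G$ of infinite order whose positive and negative iterates escape to two distinct ends of $G$, and then verify a suitably high power of $g$ is $n$-axial. I would choose $N_G$ to be any integer large enough that $\GmS\smin|B_{G}(N_G,1_{G})|$ has at least two unbounded connected components; since the number of unbounded components of $\GmS\smin|\BnG|$ is monotone nondecreasing in $n$ (a vertex removal cannot merge components), this works for every $n\geq N_G$.

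The main step is to produce a loxodromic element: an infinite-order $g_0\in G$ such that $g_0^k$ converges to one end of $G$ as $k\to+\infty$ and to a different end as $k\to-\infty$. The cleanest route is to invoke Stallings' ends theorem, which says that $G$ splits nontrivially as an amalgamated free product or HNN extension over a finite subgroup, so $G$ acts without a global fixed point on the associated Bass--Serre tree $T$ with finite edge stabilizers. A standard Bass--Serre argument then yields a hyperbolic element $g_0$ which translates along an axis in $T$, and the two endpoints of this axis correspond to distinct ends of the Cayley graph of $G$. This step is the main obstacle of the proof; a more hands-on alternative would iterate pigeonhole arguments on which unbounded components contain $h$ and $h^{-1}$ for long elements $h\in G$, and then examine products $h_1 h_2^{-1}$ to manufacture an element whose iterates escape to two distinct ends, but that route is noticeably more technical.

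Given such a $g_0$, fix $n\geq N_G$ and let $E_+$ and $E_-$ be the distinct unbounded components of $\GmS\smin|\BnG|$ eventually containing $g_0^k$ as $k\to+\infty$ and $k\to-\infty$ respectively. Because $g_0$ is hyperbolic, $d(1_G,g_0^k)$ grows linearly in $|k|$, so I can choose $M$ large enough that $d(1_G,g_0^{Mk})>2n$ for every $k\neq 0$ and that $g_0^{Mk}\in E_+$ for every $k\geq 1$ while $g_0^{Mk}\in E_-$ for every $k\leq -1$. Set $g:=g_0^M$.

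It remains to verify that $g$ is $n$-axial. For integers $a<b<c$, Lemma~\ref{lemma:separation}(a) reduces the statement that $g^b\BnG$ separates $g^a\BnG$ from $g^c\BnG$ to the statement that $\BnG$ separates $g^{a-b}\BnG$ from $g^{c-b}\BnG$. Since $a-b<0<c-b$, each of $g^{a-b}\BnG$ and $g^{c-b}\BnG$ is a connected translate of the $n$-ball which is disjoint from $\BnG$ (by the $d>2n$ condition on powers of $g$) and contains $g^{a-b}\in E_-$ or $g^{c-b}\in E_+$ respectively. Consequently each translated ball lies entirely in its corresponding unbounded component $E_-$ or $E_+$, and since $E_-\neq E_+$ the two balls lie in distinct components of $\GmS\smin|\BnG|$, as required.
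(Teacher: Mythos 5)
Your proof takes a genuinely different route from the paper's. The paper never invokes Stallings' theorem: for the given $n$ it simply picks elements $x,y$ of norm greater than $2n$ lying in distinct unbounded components of $\GmS\smin|\BnG|$, observes that $\BnG$ separates $x\BnG$ from $y\BnG$ and also $x^{-1}\BnG$ from $y^{-1}\BnG$, and then bootstraps through parts (a) and (c) of Lemma \ref{lemma:separation} to conclude that the single product $x^{-1}y$ is already $n$-axial --- no passage to high powers and no appeal to ends as limit points. Your approach buys a conceptually transparent picture (a loxodromic element translating between two ends) at the cost of much heavier machinery: Stallings' theorem plus the standard but nontrivial fact that a hyperbolic isometry of a Bass--Serre tree with finite edge stabilizers has forward and backward iterates converging to distinct ends of the Cayley graph. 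That fact is doing essentially all of the work in your argument, and you only gesture at it.

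There is also one concrete gap in the endgame. You define $N_G$ only by the requirement that $\GmS\smin|B_{G}(N_G,1_{G})|$ have at least two unbounded components, and then assert that for each $n\geq N_G$ the components $E_+$ and $E_-$ eventually containing $g_0^{k}$ as $k\to\pm\infty$ are distinct. Distinctness of the two limit ends of $g_0$ only guarantees that \emph{some} finite set separates the two tails; it does not guarantee that the particular ball $\BnG$ does. (In a free group, for instance, both tails of a conjugate $hg_0h^{-1}$ can sit inside the same component of the complement of a small ball, namely the one containing the first letter of $h$.) The fix is easy precisely because you fix $g_0$ once and for all: enlarge $N_G$ so that $B_{G}(N_G,1_{G})$ contains a finite set separating the two limit ends of $g_0$; then every ball of radius $n\geq N_G$ separates them as well, and the rest of your verification --- reducing $n$-axiality via Lemma \ref{lemma:separation}(a) to $\BnG$ separating $g^{a-b}\BnG$ from $g^{c-b}\BnG$, and using that each translated ball is connected, disjoint from $\BnG$, and hence contained in a single component --- goes through and is, if anything, more direct than the paper's inductive chain.
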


\begin{proof}
(See the potentially deceptive Figure \ref{figure:axial1}). Suppose $n$ is large enough that $\GmS\smin|\BnG|$ has at least two unbounded components, and write $B$ for $\BnG$. Choose $x,y\in G$ such that each has norm greater than $2n$ and $x$ and $y$ lie in distinct unbounded components of $\GmS\smin|B|$. Manifestly, $B$ separates $xB$ from $yB$, and $B$ also separates $x^{-1}B$ from $y^{-1}B$ since $S$ is assumed symmetric, so a path from $x^{-1}$ to $y^{-1}$ which did not pass through $B$ could be reflected to get a path from $y$ to $x$ not passing through $B$.  We will see that $x^{-1}y$ is $n$-axial.

Inductively define a biinfinite sequence $B_{i}$ of finite subsets of $G$ by setting $B_{0}=B$ and $B_{1}=x^{-1}B$, and mandating that $B_{i+2}=x^{-1}yB_{i}$ for all integers $i$. We know that $B_{0}$ separates $B_{-1}=y^{-1}B$ from $B_{1}$, and also that $B_{1}$ separates $B_{0}$ from $B_{2}=x^{-1}yB$ (by translating $xB,B,yB$ by $x^{-1}$.)  Hence Lemma \ref{lemma:separation}(a) gives us that $B_{i}$ separates $B_{i-1}$ from $B_{i+1}$ for all $i$. But then part (c) of the lemma says that (in particular) $B_{2b}$ separates $B_{2a}$ from $B_{2c}$ whenever $a<b<c$ are integers. I.e., $x^{-1}y$ is n-axial.
\end{proof}

\begin{figure}[t]
\labellist
\small\hair 2pt

\pinlabel $xB$ at 320 152
\pinlabel $yB$ at 449 152

\pinlabel $y^{-1}xy^{-1}xB$ at 128 129
\pinlabel $y^{-1}xB$ at 257 129
\pinlabel $B$ at 385 89
\pinlabel $x^{-1}yB$ at 513 129
\pinlabel $x^{-1}yx^{-1}yB$ at 642 129

\pinlabel $(y^{-1}x)^{2}y^{-1}B$ at 64 -13
\pinlabel $y^{-1}xy^{-1}B$ at 192 -13
\pinlabel $y^{-1}B$ at 320 27
\pinlabel $x^{-1}B$ at 449 27
\pinlabel $x^{-1}yx^{-1}B$ at 577 -13
\pinlabel $(x^{-1}y)^{2}x^{-1}B$ at 705 -13

\pinlabel $\text{?}$ at 350 89


\endlabellist

\centering
\centerline{\psfig{file=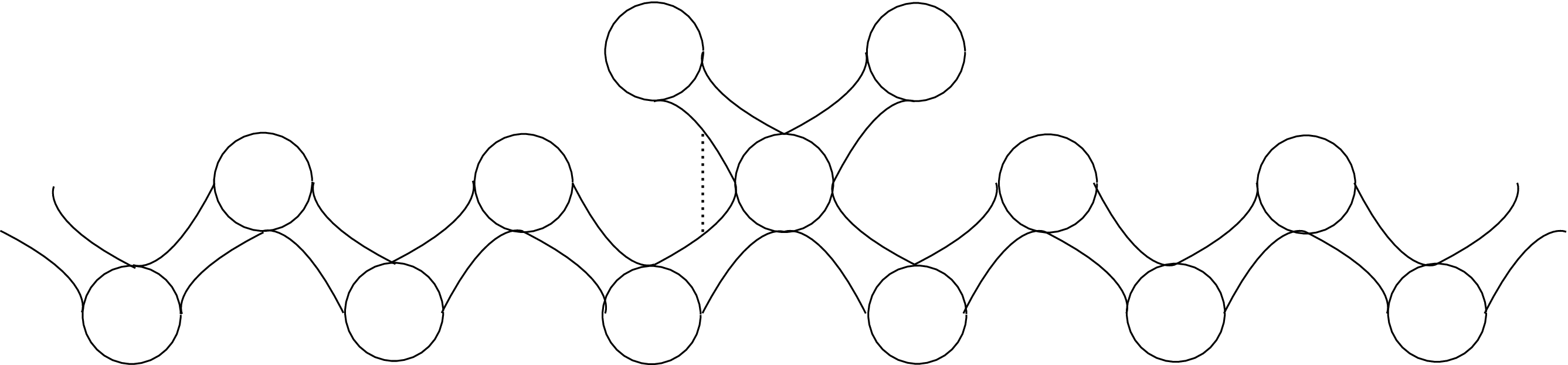,scale=60}}
\caption{Constructing an $n$-axial element.  The question mark indicates one possible way the diagram can be misleading: it is possible that $xB$ and $y^{-1}B$ are in the same connected component.}
\label{figure:axial1}
\end{figure}

We are now finally ready to prove the theorem (Figure \ref{figure:freetiling} illustrates the proof in the case where $G=\ZZ\ast\ZZ$).  Choose $n$ large enough that $X$ is defined on $n$ and there exists an $n$-axial $\gax\in G$.  Write $B$ for $\BnG$ and $B^{2}$ for $B_{G}(2n,1_{G})$ and let $g$ be some power of $\gax$ such that $g^{k}B^{2}$ is always disjoint from $B^{2}$ for $k\neq 0$---such a $g$ exists because $B^{2}$ only meets finitely many $\gax^{k}B^{2}$. Since $X$ is nonempty, there exists some $\omo\in X$.  Pick distinct integers $m_{1}$ and $m_{2}$ such that $\omo\cdot g^{m_{1}}$ and $\omo\cdot g^{m_{2}}$ agree on $B^{2}$. If we wish to proceed as we did in the case $G=\ZZ$ must find a set of orbit representatives $\fS\subset G$ for the (left) action of $\la g^{m_{2}-m_{1}}\ra$ on $G$ containing $g^{m_{1}}B$ and having properties analogous to the $\fS$ we found for $\ZZ$. We define $\fS$ as follows.

\begin{figure}[t]
\labellist
\small\hair 2pt

\pinlabel $g^{m_{1}}B^{2}$ at 128 349
\pinlabel $g^{m_{2}}B^{2}$ at 513 349
\pinlabel $\fS$ at 256 206
\pinlabel $\omo$ at 449 429
\pinlabel $\sigma$ at 449 183


\endlabellist

\centering
\centerline{\psfig{file=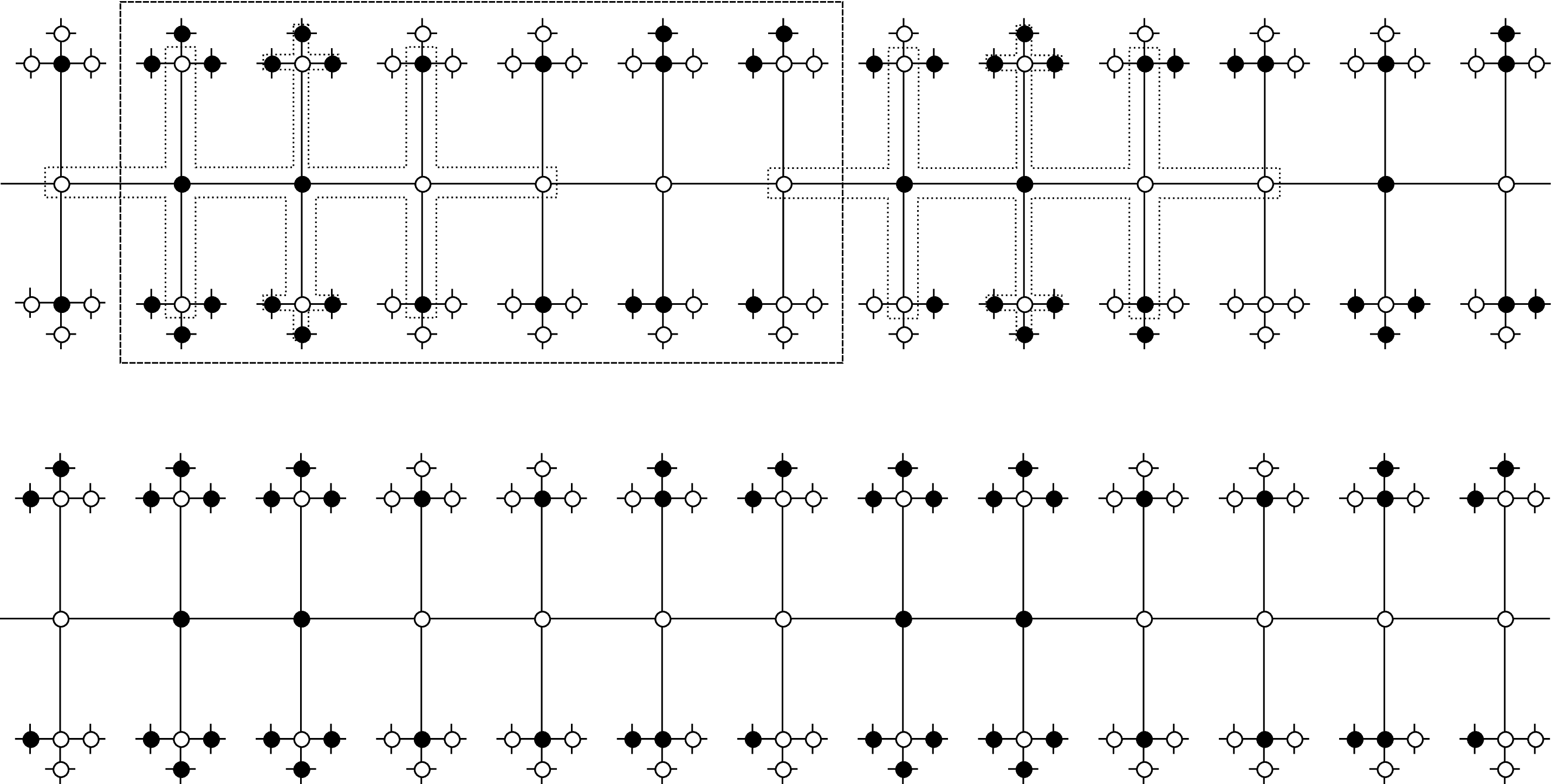,scale=60}}
\caption{If $X$ a subshift of finite type on $\ZZ\ast\ZZ$, and $\omo\in X$, then $\omo$ has the same behavior on two balls $g^{m_{1}}B^{2}$ and $g^{m_{2}}B^{2}$ whose radius is twice the defining radius of $X$. A periodic element $\sigma$ of $X$ is constructed by repeating the pattern realized by $X$ on $\fS$, a fundamental domain for the action of $g^{m_{2}-m_{1}}$ which contains a ball around $g^{m_{1}}$.}
\label{figure:freetiling}
\end{figure}

\begin{definition}
\label{definition:funddom}
Let $B_{k}=g^{m_{1}+k(m_{2}-m_{1})} B$ and let $\{\mathcal{C}_{i}\}$ consist of all connected components of $\GmS\smin\bigcup_{k\in\ZZ}B_{k}$. Note that Lemma \ref{lemma:separation} implies that each $\mathcal{C}_{i}$ touches at most two of the $B_{k}$, and these two must have consecutive $k$. We take $\fS$ to be the union of
\begin{itemize}
\item $B_{0}$,
\item the vertex sets of those $\mathcal{C}_{i}$ which touch only $B_{0}$ (and no other $B_{k}$,)
\item and the vertex sets of those $\mathcal{C}_{i}$ which touch both $B_{0}$ and $B_{1}$.
\end{itemize}
\end{definition}

The following lemma enumerates most of the necessary properties of $\fS$.

\begin{lemma}
\label{lemma:funddom}
In the situation of the above paragraph, there exists $\fS\subset G$ such that the following conditions hold.
\begin{itemize}
\item For any integer $k\neq 0$, we have $g^{k(m_{2}-m_{1})}\fS\cap\fS=\emptyset$.
\item For any $h\in G$, there exists an integer $k$ such that $h$ lies in $g^{k(m_{2}-m_{1})}\fS$.
\item For any $h\in G$, there exists an integer $k$ such that
$$hB \subset \fS\cup g^{m_{1}}B^{2}\cup g^{m_{2}}B^{2}.$$
\item $\fS$ contains $g^{m_{1}}B$.
\end{itemize}
\end{lemma}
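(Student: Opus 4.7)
The plan is to verify the four conditions defining $\fS$ in turn, relying on two preliminary geometric facts. First, by our choice of $g$ satisfying $g^{k}B^{2}\cap B^{2}=\emptyset$ for $k\neq 0$, a short calculation using left-invariance of the metric shows that the balls $B_{k}$ are pairwise at distance strictly greater than $2n$; in particular, any geodesic of length at most $n$ in $\GmS$ can meet at most one of them. Second, as already noted in Definition \ref{definition:funddom}, Lemma \ref{lemma:separation} applied to three distinct $B_{k}$'s forces every component $\mathcal{C}_{i}$ of $\GmS\smin\bigcup_{k}B_{k}$ to touch at most two of the $B_{k}$'s, and those must be consecutive.

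The containment $g^{m_{1}}B\subset\fS$ is immediate, since $g^{m_{1}}B=B_{0}$. For the covering statement, let $h\in G$: if $h\in B_{k}$ for some $k$ then $h\in g^{k(m_{2}-m_{1})}B_{0}\subset g^{k(m_{2}-m_{1})}\fS$, while if $h$ lies in some component $\mathcal{C}$, connectedness of $\GmS$ forces $\mathcal{C}$ to touch at least one $B_{j}$ (any path from $\mathcal{C}$ to $B_{0}$ has a first entry into $\bigcup B_{j}$, and the preceding vertex lies in $\mathcal{C}$); letting $k$ be the least touched index, $g^{-k(m_{2}-m_{1})}\mathcal{C}$ touches $B_{0}$ and possibly also $B_{1}$, hence is included in $\fS$. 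For disjointness of distinct translates, observe that $g^{k(m_{2}-m_{1})}\fS$ (with $k\neq 0$) consists of $B_{k}$ together with those components touching $B_{k}$ alone or $B_{k}$ and $B_{k+1}$, while $\fS$ consists of $B_{0}$ together with components touching $B_{0}$ alone or $B_{0}$ and $B_{1}$; the balls $B_{0}$ and $B_{k}$ are disjoint from each other and from every component, and a straightforward check on the set of $B_{j}$'s each component touches confirms that no single component is included in both $\fS$ and $g^{k(m_{2}-m_{1})}\fS$.

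The main obstacle is the third property, which I read as: for every $h\in G$ there exists an integer $k$ with $g^{k(m_{2}-m_{1})}hB\subset\fSp$, where $\fSp:=\fS\cup g^{m_{1}}B^{2}\cup g^{m_{2}}B^{2}$ (this is the natural analog of condition (b) in the $\ZZ$-example, and is what the remainder of the argument evidently requires). By the covering property already proved, choose $k$ with $h':=g^{k(m_{2}-m_{1})}h\in\fS$; it suffices to show $h'B\subset\fSp$. If $h'\in B_{0}$, then $h'B\subset g^{m_{1}}B^{2}\subset\fSp$. Otherwise $h'$ lies in some component $\mathcal{C}_{i}$ included in $\fS$. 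Given $v\in h'B$, pick a geodesic $h'=u_{0},u_{1},\ldots,u_{\ell}=v$ with $\ell\leq n$. If the geodesic remains in $\mathcal{C}_{i}$, then $v\in\mathcal{C}_{i}\subset\fS$. Otherwise let $j_{1}$ be the first index with $u_{j_{1}}\in B_{k'}$ for some $k'$; since $u_{j_{1}-1}\in\mathcal{C}_{i}$ is adjacent to $u_{j_{1}}$, the component $\mathcal{C}_{i}$ touches $B_{k'}$, forcing $k'\in\{0,1\}$ by the definition of $\fS$. The ``distance $>2n$'' fact then ensures that $B_{k'}$ is the only $B_{k}$ meeting the geodesic, so $d(v,g^{m_{1}+k'(m_{2}-m_{1})})\leq d(v,u_{j_{1}})+d(u_{j_{1}},g^{m_{1}+k'(m_{2}-m_{1})})\leq n+n=2n$, giving $v\in g^{m_{1}+k'(m_{2}-m_{1})}B^{2}$, which is $g^{m_{1}}B^{2}$ or $g^{m_{2}}B^{2}$ according to the value of $k'$; in either case $v\in\fSp$, as required.
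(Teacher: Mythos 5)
Your proof is correct and follows essentially the same route as the paper's: verify each property of $\fS$ by sorting points into the balls $B_{k}$ and the components $\mathcal{C}_{i}$, using that each component touches at most two consecutive $B_{k}$'s and that a length-$\leq n$ path leaving a component must first enter a ball it touches. You also correctly repaired the statement of the third bullet (reading it as $g^{k(m_{2}-m_{1})}hB\subset\fSp$, which is what the subsequent argument uses), and your explicit connectedness argument for why every component touches some $B_{j}$ fills a small gap the paper leaves implicit.
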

\begin{proof}
\noindent We now verify that $\fS$ has the desired properties, in order.
\begin{itemize}
\item For a nonzero integer $k$, it is clear that $g^{k(m_{1}-m_{2})}B_{0}$ (which is just $B_{k}$) will not meet $\fS$.  Similarly, if $\fCi$ touches just $B_{0}$, then $g^{k(m_{1}-m_{2})}\fCi$ touches just $B_{k}$, and does not intersect $\fS$. Finally, if some $\fCi$ touches $B_{0}$ and $B_{1}$, then $g^{k(m_{1}-m_{2})}\fCi$ touches $B_{k}$ and $B_{k+1}$, and hence does not intersect $\fS$.
\item Any element of $G$ lies in some $B_{k}$ or some $\fCi$. The translate $g^{-k(m_{1}-m_{2})}B_{k}$ is equal to $B_{1}\subset \fS$. If $\fCi$ meets just $B_{k}$, then $g^{-k(m_{1}-m_{2})}\fCi$ meets just $B_{0}$, and hence lies in $\fS$. If $\fCi$ meets $B_{k}$ and $B_{k+1}$, then $g^{-k(m_{1}-m_{2})}\fCi$ meets $B_{0}$ and $B_{1}$, and is thus a subset of $\fS$.
\item If $x\in B_{k}$, then $g^{-k(m_{2}-m_{1})}B\subset g^{m_{1}}B^{2}$. If $x$ is in some $\fCi$ which touches just $B_{k}$, then any path of length $n$ starting at $x$ must either stay in $\fCi$ or go through $B_{k}$.  Hence, $xB\subset \fCi \cup g^{m_{1}+k(m_{2}-m_{1})}B^{2}$, so $g^{-k(m_{2}-m_{1})}xB\subset g^{m_{1}}B^{2}\cup \fS$. If $x$ lies in some $\fCi$ which touches $B_{k}$ and $B_{k+1}$, then by the same logic, $g^{-k(m_{2}-m_{1})}xB\subset g^{m_{1}}B^{2}\cup \fS\cup g^{m_{2}}\fS$.
\item By definition, $\fS$ contains $g^{m_{1}}B$, which is $B_{0}$.
\end{itemize}
\end{proof}

We now finish the proof of Theorem \ref{theorem:ends}. Take $\fS$ as in the lemma.  For $x\in G$, define $\frm(x)$ to be the $g^{k(m_{2}-m_{1})}$ translate of $x$ which lies in $\fS$. Define $\sigma(x)=\omo(\frm(x))$, so that by definition $\sigma\cdot g^{m_{2}-m_{1}}=\sigma$.  Let $\fSp=\fS\cup g^{m_{1}}B^{2}\cup g^{m_{2}}B^{2}$. As in the $\ZZ$ case, we have that $\sigma$ agrees with $\omo$ on $\fSp$ by the following case by case argument.
\begin{itemize}
\item If $x\in \fS$, then $\sigma(x)=\omo(x)$ by definition.
\item If $x\in g^{m_{1}}B^{2}\smin \fS$, then $x$ lies in some $\fCi$ which touches $B_{0}$ (and possibly also $B_{-1}$,) because there is a path of length at mst $n$ from $x$ to $B_{0}$, and this path cannot pass through any other $B_{k}$ by our assumption that the $g^{k}B^{2}$ are all disjoint.  It follows that either $x$ or $g^{m_{2}-m_{1}}x$ lies in $\fS$, so that we have either
$$\sigma(x)=\omo(x)$$
by definition, or
$$\sigma(x)=\omo(g^{m_{2}-m_{1}}x)=\omo(x),$$
by our assumption that $\omo\cdot g^{m_{1}}$ and $\omo\cdot g^{m_{2}}$ agree on $B^{2}$.
\item If $x\in g^{m_{2}}B^{2}\smin \fS$, then we see similarly that $x$ lies in some $\fCi$ which touches $B_{1}$ (and possibly also $B_{2}$,) and we can proceed in the same fashion.
\end{itemize}

We see that $\sigma\in X$ because for any $x\in G$, Lemma \ref{lemma:funddom} gives us a $k$ such that $g^{k(m_{2}-m_{1})}xB\in\fSp$, and then we have
$$\sigma\cdot x|_{B}=\sigma\cdot g^{-k(m_{2}-m_{1})}x|_{B}=\omo\cdot g^{-k(m_{2}-m_{1})}x|_{B}.$$
Since $X$ is defined on $B$, this establishes the desired result.  We already observed that $\sigma$ is $g^{m_{2}-m_{1}}$ periodic, so we have proved the theorem.
\section{Derivative subshifts.}
\label{section:dshift}
In this section, we will exhibit a subshift of finite type which parameterizes $n$-Lipschitz functions from a finitely presented group $G$ to a finitely generated group $H$, up to translation on $H$ (Theorem \ref{theorem:derivativeshift}). The idea is that an $n$-Lipschitz function $f$ is determined, up to choice of $f(1)$, by its derivative (Definition \ref{definition:derivative} and Figure \ref{figure:derivative}), which is a bounded function from $G\times S$ to $H$ whose value at $(g,s)$ measures the difference between $f(g)$ and $f(gs)$.  The set of such derivatives is shown to be a subshift of finite type when $G$ is finitely presented, by showing that a function on $G\times S$ which looks like a derivative locally can be ``integrated" to give a globally defined $n$-Lipschitz function. Of course, the condition of looking like a derivative locally will be encoded by a finite set of forbidden patterns.  Note that similar subshifts have previously arisen in the literature.  For example Gromov used a subshift parameterizing ``integer 1-cocycles" to code the boundary of a hyperbolic group \cite[\S 3]{cp}.

\paragraph{Notation.} Throughout this section, $G$ will be a group generated by a finite symmetric set $S$, and $H$ will be a group generated by a finite symmetric set $T$. As usual, fixing a finite generating set for a group endows it with a word metric.

\begin{figure}[t]
\labellist
\small\hair 2pt

\pinlabel $0$ at 5 30  
\pinlabel $0$ at 5 114 
\pinlabel $0$ at 5 198 

\pinlabel $1$ at 88 30   
\pinlabel $1$ at 88 114  
\pinlabel $1$ at 88 198  

\pinlabel $1$ at 172 30  
\pinlabel $1$ at 172 114 
\pinlabel $2$ at 172 198 

\pinlabel $1$ at 256 30  
\pinlabel $1$ at 256 114 
\pinlabel $2$ at 256 198 

\pinlabel $1$ at 400 45  
\pinlabel $1$ at 400 129 
\pinlabel $1$ at 400 213 

\pinlabel $0$ at 484 45  
\pinlabel $0$ at 484 129 
\pinlabel $1$ at 484 213 

\pinlabel $0$ at 568 45  
\pinlabel $0$ at 568 129 
\pinlabel $0$ at 568 213 

\pinlabel $-1$ at 435 45  
\pinlabel $-1$ at 435 129 
\pinlabel $-1$ at 435 213 

\pinlabel $0$ at 519 45  
\pinlabel $0$ at 519 129  
\pinlabel $-1$ at 519 213  

\pinlabel $0$ at 603 45  
\pinlabel $0$ at 603 129 
\pinlabel $0$ at 603 213 

\pinlabel $0$ at 376 109 
\pinlabel $0$ at 376 193 
 
\pinlabel $0$ at 460 109 
\pinlabel $0$ at 460 193 
 
\pinlabel $0$ at 544 109 
\pinlabel $-1$ at 544 193  
 
\pinlabel $0$ at 628 109 
\pinlabel $-1$ at 628 193 

\pinlabel $0$ at 376 68  
\pinlabel $0$ at 376 152 

\pinlabel $0$ at 460 68  
\pinlabel $0$ at 460 152 

\pinlabel $0$ at 544 68  
\pinlabel $1$ at 544 152 

\pinlabel $0$ at 628 68  
\pinlabel $1$ at 628 152 

\endlabellist

\centering
\centerline{\psfig{file=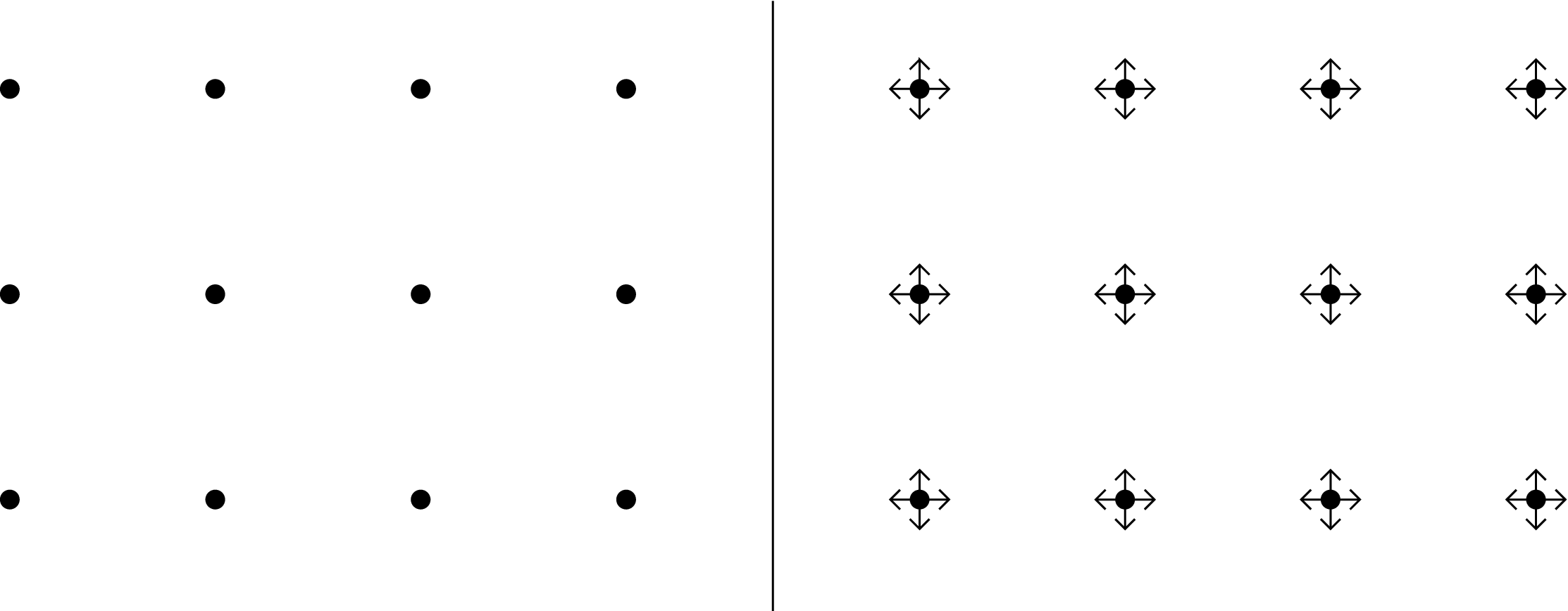,scale=60}}
\caption{At left, a $1$-Lipschitz function $f:\ZZ^{2}\To\ZZ$. At right, its derivative $df$.}
\label{figure:derivative}
\end{figure}

\begin{definition}
\label{definition:derivative}
Fix $n\in \ZZ$ and finitely generated groups $G$ and $H$.  We denote the set of $n$-Lipschitz functions from $G$ to $H$ by $\Lip_{n}(G,H)$.  The derivative is the map $d:\Lip_{n}(G,H)\To (\BHN^{S})^{G}$ which takes $f\in\Lip_{n}(G,H)$ to
$$df:g\mapsto (s\mapsto f(g)^{-1}f(gs)).$$
We write $\la df(g),s\ra$ for $df(g)$ evaluated at $s$.
\end{definition}

See Figure \ref{figure:derivative} for an example.  Observe that $f\in \Lip_{n}(G,H)$ is determined by $f(1)$ and $df$.  We now state the main theorem of this section.

\begin{theorem}
\label{theorem:derivativeshift}
If $G$ is finitely presented, then for any integer $n$ and finitely generated group $H$, we have that
$$\{df:f\in\Lip_{n}(G,H)\}\subset (\BHN^{S})^{G}$$
is a subshift of finite type.
\end{theorem}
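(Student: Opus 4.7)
The plan is to exhibit $\{df : f \in \Lip_n(G,H)\}$ as the solution set of finitely many local equations on candidate $\phi \in (\BHN^S)^G$ (with $N = n$, which is forced by $f$ being $n$-Lipschitz), and then to repackage the negations of these equations as a finite set of forbidden patterns. Shift-invariance is straightforward: if $f \in \Lip_n(G,H)$ and $h \in G$, then $f_h : g \mapsto f(hg)$ is again $n$-Lipschitz and a direct computation gives $df_h = (df)\cdot h$, so the image of $d$ is $G$-invariant.

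Fix a finite presentation $G = \la S \mid R \ra$. The local conditions I would impose on $\phi$ are: (i) for every $s \in S$ and every $g \in G$, $\la \phi(g), s \ra \cdot \la \phi(gs), s^{-1} \ra = 1_H$; and (ii) for every relator $r = s_1 s_2 \cdots s_k \in R$ and every $g \in G$, the telescoping product
$$\la \phi(g), s_1 \ra \la \phi(gs_1), s_2 \ra \cdots \la \phi(gs_1 \cdots s_{k-1}), s_k \ra$$
equals $1_H$. Each condition constrains $\phi$ on a uniformly bounded set of group elements (inside the Cayley ball of radius $\max_{r \in R}|r|$ around $g$), so each can be encoded as finitely many forbidden patterns, and there are only finitely many conditions up to translation by $G$. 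If $\phi = df$, both conditions hold by telescoping; in case (ii) the product equals $f(g)^{-1}f(gr) = f(g)^{-1}f(g) = 1_H$ since $r =_G 1$.

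For the converse, given $\phi$ satisfying (i) and (ii), I would pick any basepoint $h_0 \in H$ and attempt to define $f:G \To H$ along paths by
$$f(s_1 \cdots s_k) = h_0 \cdot \la \phi(1), s_1 \ra \la \phi(s_1), s_2 \ra \cdots \la \phi(s_1 \cdots s_{k-1}), s_k \ra,$$
where $s_1 \cdots s_k$ is any word in $S$ representing the given element of $G$. The main obstacle is well-definedness. I would handle it by appealing to the fact that two words in $S^{\ast}$ representing the same element of $G$ differ by a sequence of moves inserting or deleting either a subword $ss^{-1}$ or a conjugate $w r w^{-1}$ of some $r \in R$; this is precisely the statement that $R$ normally generates the kernel of $S^{\ast} \To G$, i.e.\ the finite presentation hypothesis. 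Condition (i) (translated by appropriate group elements) shows that inserting an $ss^{-1}$ subword leaves the product unchanged, and condition (ii) (translated) handles cyclic conjugates of relators.

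Once $f$ is well-defined, it is automatically $n$-Lipschitz, since $\la \phi(g), s \ra \in \BHN$ forces $d_H(f(g), f(gs)) \leq n$, and by construction $df = \phi$. Thus the image of $d$ coincides with the subshift of finite type cut out by the forbidden patterns associated to the negations of (i) and (ii), completing the proof. The finite presentation hypothesis is used only in the well-definedness step; without it, one would still obtain a closed $G$-invariant subset, but it would generally require infinitely many forbidden patterns (one for each relator in an infinite presentation).
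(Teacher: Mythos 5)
Your proposal is correct and follows essentially the same route as the paper: impose the local telescoping conditions coming from the relators (plus the $ss^{-1}$ cancellation condition), observe that derivatives satisfy them, and conversely integrate any locally consistent configuration along words, with well-definedness guaranteed by the finite presentation via elementary moves on words. The paper packages the elementary moves as replacements of subwords of length at most $K_{G}$ representing the same group element rather than insertions of $ss^{-1}$ and conjugated relators, but these are equivalent formulations of the same argument.
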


\paragraph{A false converse.} In an earlier draft of this article, we conjectured a strong converse to Theorem \ref{theorem:derivativeshift}---namely that if $G$ is such that $\{df:f\in\Lip_{n}(G,\ZZ)\}$ is always a subshift of finite type, then $G$ must be finitely presented. However, Cornulier \cite{cornulier} has pointed out to us some interesting counterexamples. In particular, if $G$ satisfies the finiteness property $FP_{2}$, then all of these subshifts will be of finite type, but it is well known that there are groups which have $FP_{2}$ but are not finitely presented \cite{bestvinabrady}.

\paragraph{Proof of Theorem \ref{theorem:derivativeshift}.} Let $X_{n}$ denote the set $\{df:f\in\Lip_{n}(G,\ZZ)\}$, let $\AAA$ denote $B_{\ZZ}(n,0)^{S}$, and let the natural number $K_{G}\geq 2$ be such that $G$ is presented with respect to $S$ by relators of length at most $K_{G}$. We wish to prove that $X_{n}$ is a subshift of finite type, meaning that membership in $X_{n}$ is determined by some finite list of local conditions.  What sort of local conditions must derivatives satisfy? At least one is immediately obvious, namely we know, for any $g\in G$, that
$$\la df(g),s\ra=\la df(gs),s^{-1}\ra.$$
More generally, if some word $w=s_{0}\ldots s_{k}$ in $S^{\ast}$ is a relation, then we must have that the telescoping product
$$\la df(g),s_{0}\ra \la df(gs_{0}),s_{1}\ra
\ldots\la df(gs_{0}\ldots s_{k-1}),s_{k}\ra$$
represents $1_{H}$ for any $g\in G$. For a fixed $w$, this represents a local condition on $df$, since the product depends only on the values taken by $df$ in $B_{G}(|w|,1)$. Since $G$ is finitely presented, we might hope that $X_{n}$ is defined by a finite set of conditions of this nature, and this is in fact the case. We begin by giving the expected notation for products like the above.



\begin{definition}
\label{definition:integral}
Let $g$ be an element of $G$, let $w\in S^{\ast}$ be some word $s_{0}s_{1}\ldots s_{k}$ (where $s_{i}\in S$,) and let $\sigma$ be an element of $\AAA^{G}$.  We define $\int_{g\cdot w}\sigma$ as the product
$$\la \omg(g),s_{0}\ra\la \omg(gs_{0}),s_{1}\ra\la \omg(gs_{0}s_{1}),s_{2}\ra
\ldots \la \omg(gs_{0}\ldots s_{k-1}),s_{k}\ra.$$
\end{definition}

We now record some properties of this gadget.

\begin{lemma}
\label{lemma:propertiesofintegral}
The integral has the following familiar properties.
\begin{itemize}
\item {\bf Locality.} The value of $\int_{g\cdot w}\sigma$ is determined by $\omg|_{B_{G}(|w|,g)}$.
\item {\bf Additivity.} If $w_{1},w_{2}\in S^{\ast}$, and $h$ is the image of $w_{1}$ in $G$, then
$$\int_{g\cdot w_{1}}\omg\int_{gh\cdot w_{2}}\omg
=\int_{g\cdot w_{1}w_{2}}\omg.$$
\item {\bf Fundamental theorem.} Suppose $f\in\Lip_{n}(G,H)$. Then for $g\in G$ and $w\in S^{\ast}$, we have
$$\int_{g\cdot w}df=f(g)^{-1}f(gw).$$
\end{itemize}
\end{lemma}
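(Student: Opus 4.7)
The plan is to dispatch each of the three parts directly from the definition of $\int_{g\cdot w}\sigma$ as a telescoping product over the partial words of $w=s_{0}s_{1}\ldots s_{k}$, so that $|w|=k+1$.

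For locality, I would observe that the only queries made to $\sigma$ in the defining formula are at the partial products $g, gs_{0}, gs_{0}s_{1},\ldots, gs_{0}\cdots s_{k-1}$. Each of these lies at distance at most $k < |w|$ from $g$ in the word metric on $G$, so all queried points lie inside $B_{G}(|w|,g)$. Hence the value of $\int_{g\cdot w}\sigma$ is determined by $\sigma|_{B_{G}(|w|,g)}$.

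For additivity, I would write $w_{1}=s_{0}\cdots s_{k}$ and $w_{2}=t_{0}\cdots t_{\ell}$, so that $w_{1}w_{2}=s_{0}\cdots s_{k}t_{0}\cdots t_{\ell}$ is the concatenation. Using the hypothesis that $w_{1}$ represents $h\in G$, so that $gs_{0}\cdots s_{k}=gh$, the partial products of $w_{1}w_{2}$ based at $g$ coincide with those of $w_{1}$ based at $g$ for the first block, and with those of $w_{2}$ based at $gh$ for the second block. Substituting these partial products into the definition of the integral then shows that $\int_{g\cdot w_{1}w_{2}}\sigma$ factors as the product $\int_{g\cdot w_{1}}\sigma\cdot\int_{gh\cdot w_{2}}\sigma$, with no term appearing twice or being dropped.

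For the fundamental theorem, substitute $\sigma=df$ into the defining product. By definition $\la df(g'),s\ra = f(g')^{-1}f(g's)$ for any $g'\in G$ and $s\in S$, so each consecutive pair of factors contributes $f(gs_{0}\cdots s_{i-1})^{-1}f(gs_{0}\cdots s_{i})$. Writing out the full product,
\[
\int_{g\cdot w}df
= f(g)^{-1}f(gs_{0})\cdot f(gs_{0})^{-1}f(gs_{0}s_{1})\cdots f(gs_{0}\cdots s_{k-1})^{-1}f(gs_{0}\cdots s_{k}),
\]
which collapses by telescoping cancellation in $H$ to $f(g)^{-1}f(gw)$.

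I do not anticipate any genuine obstacle here: each part is a routine unpacking of the definition of $\int_{g\cdot w}\sigma$, and the three properties are set up precisely so that the integral behaves like a classical line integral, with locality from finiteness of the path, additivity from word concatenation, and the fundamental theorem from the telescoping structure of $df$. The only subtlety is bookkeeping care with the indexing convention $w=s_{0}\cdots s_{k}$ versus $|w|=k+1$, and with tracking the basepoint $gh$ when concatenating.
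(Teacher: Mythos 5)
Your proposal is correct and follows exactly the paper's argument: locality and additivity are read off directly from the defining formula, and the fundamental theorem comes from collapsing the telescoping product after substituting $\la df(g'),s\ra=f(g')^{-1}f(g's)$. The paper simply states the first two parts as immediate and writes out only the telescoping computation, so your version is the same proof with the bookkeeping made explicit.
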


\begin{proof}
Locality and additivity follow immediately from Definition \ref{definition:integral}.  The fundamental theorem follows from collapsing the telescoping product
$$\int_{g\cdot w}\omg=\la \omg(g),s_{0}\ra\la \omg(gs_{0}),s_{1}\ra\la \omg(gs_{0}s_{1}),s_{2}\ra
\ldots \la \omg(gs_{0}\ldots s_{k-1}),s_{k}\ra$$
$$=(f(g)^{-1}f(gs_{0}))(f(gs_{0})^{-1}f(gs_{0}s_{1}))(f(gs_{0}s_{1})^{-1}f(gs_{0}s_{1}s_{2}))
\ldots(f(gs_{0}\ldots s_{k-1})^{-1}f(gs_{0}\ldots s_{k}))$$
$$=f(g)^{-1}f(gw).$$
\end{proof}

We will now proceed with the proof of theorem \ref{theorem:derivativeshift}. Let $Y_{n}$ consist of all $\omg\in\AAA^{G}$ such that for any $g\in G$,
$$\int_{g\cdot w_{1}}\omg=\int_{g\cdot w_{2}}\omg$$
whenever the words $w_{1},w_{2}\in S^{\ast}$ are such that $|w_{1}|,|w_{2}|\leq K_{G}$ and $w_{1}$ and $w_{2}$ represent same element of $G$.  The fundamental theorem (Lemma \ref{lemma:propertiesofintegral}) shows that $Y_{n}$ contains $X_{n}$ and locality (Lemma \ref{lemma:propertiesofintegral}) shows that $Y_{n}$ is a subshift of finite type. To prove Theorem \ref{theorem:derivativeshift}, it thus suffices to show that $X_{n}\supset Y_{n}$, i.e., that every element of $Y_{n}$ is the derivative of some element of $\Lip_{n}(G,H)$.



\begin{lemma}
\label{lemma:poincare}
For any $\sigma\in Y_{n}$, the quantity
$$\int_{g\cdot w}\sigma$$
depends only on $g$ and the value $w$ represents in $G$.
\end{lemma}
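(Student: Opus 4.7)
The plan is to reduce the statement to showing that $\int_{g \cdot w}\sigma = 1_H$ whenever $w \in S^{\ast}$ represents $1_G$. Given any $w_1, w_2$ with the same $G$-image $h$, additivity yields
$$\int_{g \cdot w_1 w_2^{-1}}\sigma \;=\; \int_{g \cdot w_1}\sigma \cdot \int_{gh \cdot w_2^{-1}}\sigma,$$
so once I establish the vanishing claim together with an ``inversion'' identity $\int_{gh \cdot w_2^{-1}}\sigma = \bigl(\int_{g \cdot w_2}\sigma\bigr)^{-1}$, the desired equality $\int_{g \cdot w_1}\sigma = \int_{g \cdot w_2}\sigma$ follows immediately.

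First I would establish a ``two-way'' identity $\langle\sigma(g),s\rangle\langle\sigma(gs),s^{-1}\rangle = 1_H$ by applying the defining condition of $Y_n$ to the words $ss^{-1}$ and the empty word, both of length at most $K_G \geq 2$ and both representing $1_G$. A short induction on $|u|$ using this and additivity then yields $\int_{gu \cdot u^{-1}}\sigma = \bigl(\int_{g\cdot u}\sigma\bigr)^{-1}$ for every word $u \in S^{\ast}$; applied inside a longer word, it shows that inserting or deleting any $ss^{-1}$ leaves the integral unchanged. In particular, the integral depends only on the element of the free group $F(S)$ represented by the integrating word.

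Next I would handle a single conjugate of a relator. Because each relator $r$ has length at most $K_G$ and represents $1_G$, the $Y_n$ condition (comparing $r^{\pm 1}$ with the empty word) gives $\int_{h \cdot r^{\pm 1}}\sigma = 1_H$ for every $h \in G$. Combining this with additivity and the inversion identity from the previous paragraph produces
$$\int_{g \cdot u r^{\pm 1} u^{-1}}\sigma \;=\; \int_{g \cdot u}\sigma \cdot 1_H \cdot \bigl(\int_{g \cdot u}\sigma\bigr)^{-1} \;=\; 1_H$$
for every $g \in G$ and every $u \in S^{\ast}$.

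Finally, to handle a general $w$ representing $1_G$, I would use that $G$ is presented by relators of length at most $K_G$ to write $w = w_1 \cdots w_N$ in $F(S)$, with each $w_i$ a conjugate $u_i r_i^{\pm 1} u_i^{-1}$ of such a relator. By the free-reduction invariance established above, $\int_{g\cdot w}\sigma$ equals the integral along the $S^{\ast}$-concatenation $w_1 \cdots w_N$; since each $w_i$ represents $1_G$, additivity collapses the computation to $\prod_i \int_{g\cdot w_i}\sigma = 1_H$. The one subtlety in this plan is the passage between equality in $S^{\ast}$ and equality in $F(S)$, which is precisely what the free-reduction invariance step is designed to bridge.
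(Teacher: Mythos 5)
Your argument is correct, but it follows a different route from the paper's. The paper proves the lemma by asserting that any two words representing the same element of $G$ are joined by a ``homotopy'' --- a chain of words in which consecutive terms differ by replacing a subword $v$ of length $\leq K_G$ with another word $v'$ of length $\leq K_G$ representing the same group element --- and then observing via additivity that each such elementary move preserves $\int_{g\cdot w}\sigma$. You instead make explicit the combinatorial group theory that the homotopy assertion packages up: you reduce to showing the integral vanishes on words representing $1_G$, derive the inversion identity $\int_{gu\cdot u^{-1}}\sigma=\bigl(\int_{g\cdot u}\sigma\bigr)^{-1}$ from the $Y_n$-condition applied to $ss^{-1}$ versus the empty word, deduce invariance under free reduction, kill conjugates of relators, and then expand a general trivial word as a product of such conjugates in $F(S)$. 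Each step checks out (the relators and their inverses have length $\leq K_G$, and $K_G\geq 2$ covers $ss^{-1}$), and the final reduction via $\int_{g\cdot w_1w_2^{-1}}\sigma=\int_{g\cdot w_1}\sigma\cdot\bigl(\int_{g\cdot w_2}\sigma\bigr)^{-1}$ is sound. What your version buys is self-containedness: the paper's ``there exists a homotopy'' is exactly the standard fact about normal closures of relators that you prove in the course of your argument, so you are not taking anything on faith; the cost is length and the need for the auxiliary inversion identity, which the paper's rewriting argument avoids entirely. The only point worth making explicit in a final write-up is the convention $\int_{g\cdot\varnothing}\sigma=1_H$ for the empty word, which both your comparison with the empty word and the paper's free-reduction moves implicitly require but which Definition \ref{definition:integral} only states for nonempty words.
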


\begin{proof}
Let $w$ and $w^{\prime}$ be words representing the same element of $G$. Then there exists a homotopy
$$w=w_{0},w_{1},\ldots,w_{k-1},w_{k}=w^{\prime}$$
from $w$ to $w^{\prime}$, meaning a sequence of words $w_{i}\in S^{\ast}$ such that each pair $(w_{i},w_{i+1})$ has the form $(uvx,uv^{\prime}x)$ where $u,v,v^{\prime},x\in S^{\ast}$ are such that $v$ and $v^{\prime}$ have length $\leq K_{G}$ and represent the same element of $G$. But then by repeated application of Lemma \ref{lemma:propertiesofintegral} we have that
$$\int_{g\cdot uvx}\sigma=\int_{g\cdot u}\sigma\int_{gu\cdot v}\sigma\int_{guv\cdot x}\sigma$$
$$=\int_{g\cdot u}\sigma\int_{gu\cdot v^{\prime}}\sigma\int_{guv^{\prime}\cdot x}\sigma=\int_{g\cdot uv^{\prime}x}\omg.$$
It follows that $\int_{g\cdot w}\omg=\int_{g\cdot w^{\prime}}\omg$ as desired.
\end{proof}

Given $\omg\in Y_{n}$, we may now define a function $f\in\Lip_{n}(G,H)$ with derivative $\omg$ by taking $f(g)$ to be
$$\int_{1_{g}\cdot w}\omg$$
for any $w$ representing $g$ (by Lemma \ref{lemma:poincare}, the choice of $w$ is irrelevant). We can see that $f$ is $n$-Lipschitz by the fact that for words $w_{1},w_{2}$ representing $g_{1},g_{2}\in G$ respectively, we have
$$d(f(g_{1}),f(g_{2}))=f(g_{1})^{-1}f(g_{2})=\left|\left(\int_{1_{G}\cdot w_{1}}\omg\right)^{-1}\int_{1_{G}\cdot w_{2}}\omg\right|_{T}$$
$$=\left|\int_{g_{1}\cdot w_{1}^{-1}w_{2}}\omg\right|_{T}=\left|\int_{g_{1}\cdot w}\omg\right|_{T}\leq nd(g_{1},g_{2})$$
for a geodesic word $w$ representing $g_{1}^{-1}g_{2}$ (we have used the fact that $\int_{g_{1}\cdot w_{1}^{-1}}\omg$ is the inverse of $\int_{1_{G}\cdot w_{1}}\omg$ for $\omg\in Y_{n}$, which follows from Lemma \ref{lemma:poincare}.) We see that $df=\omg$ by similar reasoning. Hence, we have shown that the subshift of finite type $Y_{n}$ is equal to $X_{n}$, thus establishing Theorem \ref{theorem:derivativeshift}.

\section{Parameterizing quasi isometries.}
\label{section:qi}
In this section, we shall define a subshift of finite type which codes certain quasi isometries between two finitely presented groups $G$ and $H$ (Lemma \ref{lemma:qipairshift}). This technical result will be used in all of our main theorems. In Subsection \ref{subsection:weak}, we shall see that if some point of this subshift is stabilized by a subgroup $G_0\subset G$, then this point corresponds to a quasi isometry $G\To H$ whose restriction to $G_0$ is a homomorphism (Proposition \ref{proposition:periods}). This will imply one of our main theorems (Theorem \ref{theorem:mainthmweak}). Namely, if $G$ is finitely presented and torsion free, and $G$ has no weakly aperiodic subshift of finite type, then we shall see that $\{G\}$ is QI-rigid (Corollary \ref{corollary:weakaperiodicity}).

Throughout this section $G$ and $H$ will be finitely presented groups equipped with fixed finite generating sets $S$ and $T$, which induce word metrics on $G$ and $H$. We would like to say that the set of $df$ such that $f:G\To H$ is an $n$-quasi isometry forms a subshift of finite type, but it is far from clear that one can verify any sort of quasi injectivity or properness from the local behavior of $df$. Instead, we shall look at the set of pairs $(df,\lFf)$ where $f$ is an $n$-Lipschitz quasi isometry and $\lFf$ records the local behavior of an $n$-Lipschitz two-sided $n$-quasi inverse $F:H\To G$. We formalize this as follows.

\begin{definition}
\label{definition:qipair}
An $n$-QI pair is a pair of functions $(f,F)$ where
\begin{itemize}
\item $f:G\To H$ and $F:H\To G$ are $n$-Lipschitz.
\item $F$ is a left $n$-quasi inverse to $f$, i.e., $d(F\circ f (g),g)\leq n$ for all $g\in G$.
\item $F$ is a right $n$-quasi inverse to $f$, i.e., $d(f\circ F (h),h)\leq n$ for all $h\in H$.
\end{itemize}
The set of $n$-QI pairs is denoted $\QIP_n(G,H)$, suppressing the dependence on choice of generating sets $S$ and $T$.
\end{definition}

The following proposition shows that $f:G\To H$ is a member of some QI-pair if and only if it is a quasi isometry.

\begin{proposition}
\label{proposition:qipair}
If $f:G\To H$ is a quasi isometry, then there exist $n\in\NN$ and $F:H\To G$ such that $(f,F)\in \QIP_n(G,H)$.

Conversely, if $(f,F)\in \QIP_n(G,H)$, then $f$ is a quasi isometry.
\end{proposition}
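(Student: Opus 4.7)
The plan is to prove the two implications separately, with the converse being nearly immediate and the forward direction requiring a careful construction.

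For the \emph{converse}, I would assume $(f,F)\in\QIP_n(G,H)$ and check the two clauses of Definition~\ref{definition:qi} directly. The $n$-Lipschitz condition on $f$ gives the upper inequality $d(f(g_1),f(g_2))\leq n\cdot d(g_1,g_2)\leq n\cdot d(g_1,g_2)+n$. For the lower inequality, use the left quasi-inverse condition together with the $n$-Lipschitz property of $F$:
\[
d(g_1,g_2)\leq d(g_1,F(f(g_1)))+d(F(f(g_1)),F(f(g_2)))+d(F(f(g_2)),g_2)\leq 2n+n\cdot d(f(g_1),f(g_2)),
\]
which rearranges to the required lower bound after possibly enlarging $n$. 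Quasi-surjectivity is immediate from the right quasi-inverse: every $h\in H$ is within $n$ of the point $f(F(h))\in f(G)$.

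For the \emph{forward direction}, suppose $f:G\To H$ is a quasi isometry, so that for some $m\in\NN$ it is an $m$-quasi-isometric embedding and $m$-quasi-surjective. The first key observation is that $f$ is automatically Lipschitz with respect to the word metrics: for any $s\in S$ we have $d(f(g),f(gs))\leq m\cdot 1+m=2m$, and a function between finitely generated groups with word metrics whose variation across a single generator edge is bounded by $C$ is automatically $C$-Lipschitz by telescoping along a geodesic word. So $f$ is $2m$-Lipschitz. Next, I would define $F:H\To G$ by choosing, for each $h\in H$, some $F(h)\in G$ satisfying $d(f(F(h)),h)\leq m$, which exists by quasi-surjectivity. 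This gives the right quasi-inverse property for free, and the left quasi-inverse property follows because $d(f(F(f(g))),f(g))\leq m$ combined with the quasi-isometric embedding bound forces $d(F(f(g)),g)$ to be bounded by a constant depending only on $m$.

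The remaining and most delicate step is to check that $F$ can be taken Lipschitz, since an a priori arbitrary choice of preimages need not vary continuously. For this I would again use the ``bounded across generators'' principle: for adjacent $h,ht\in H$ with $t\in T$, the triangle inequality yields $d(f(F(h)),f(F(ht)))\leq 2m+1$, and then the quasi-isometric embedding inequality for $f$ bounds $d(F(h),F(ht))$ by a constant depending only on $m$. Telescoping along a geodesic word in $T^\ast$ then shows $F$ is globally Lipschitz. Choosing $n$ to exceed all the constants that appear (Lipschitz constants of $f$ and $F$, and both quasi-inverse defects) gives $(f,F)\in\QIP_n(G,H)$. The main obstacle is precisely this Lipschitz verification for $F$; it succeeds because the word metric reduces ``Lipschitz'' to a finite local condition, and the quasi-isometry data supplies exactly the required local bound.
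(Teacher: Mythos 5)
Your proof is correct and follows essentially the same route as the paper: the converse via the triangle inequality through $F\circ f$, and the forward direction by choosing preimages of points of $H$ using quasi-surjectivity and then extracting the Lipschitz and quasi-inverse bounds from the quasi-isometric embedding inequality. The only cosmetic difference is that you establish the Lipschitz bounds by telescoping over generator edges, whereas the paper absorbs the additive constants using the fact that $1$ is the smallest positive distance in a word metric; the two devices are interchangeable here.
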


\begin{proof}
Suppose $(f,F)\in\QIP_{n}(G,H)$.  Then for any $x,y\in G$,
$$d(x,y)\leq d(x,(F\circ f)(x))+d((F\circ f)(x),(F\circ f)(y))+d((F\circ f)(y),y)\leq 2n+n d(f(x),f(y)),$$
and thus:
$$d(f(x),f(y))\geq \frac{d(x,y)}{n}-2.$$
Since $f$ is Lipschitz, this implies that $f$ is a quasi isometric embedding.  But $f$ is quasi surjective because for $h\in H$ we have $d(f(F(h)),h)\leq n$.  Hence, $f$ is a quasi isometry.

Conversely, suppose $f$ is an $N$-quasi isometry.  We will now define $F:H\rightarrow G$ such that $(f,F)$ is a QI pair. Using $N$-quasi surjectivity, for each $h\in H$, choose an $F(h)$ in $G$ such that $d(f(F(h)),h)\leq N$.  Since $f$ is an $N$-quasi isometric embedding, we know that for all $h_{1},h_{2}\in H$, we have
$$\frac{d(F(h_{1}),F(h_{2}))}{N}-N\leq d(f(F(h_{1})),f(F(h_{2})))
\leq d(f(F(h_{1})),h_{1})+d(h_{1},h_{2})+d(h_{2},f(F(h_{2})))$$
$$\leq 2N+d(h_{1},h_{2}).$$
Hence
$$d(F(h_{1}),F(h_{2}))\leq Nd(h_{1},h_{2})+3N^{2},$$
so $F$ is $3N^{2}+N$-Lipschitz (as $1$ is the smallest positive distance in $H$). We can see that $f$ is $2N$-Lipschitz because, since $1$ is the smallest positive distance in $G$,
$$d(f(x),f(y))\leq Nd(x,y)+N\leq 2Nd(x,y).$$

By definition, $F$ is a right $N$-quasi inverse to $f$.  To see that it is a left quasi inverse, note that for $g\in G$ we have
$$d(f((F\circ f)(g)),f(g))=d((f\circ F)(f(g)),f(g))\leq N.$$
It follows that
$$d((F\circ f)(g),g)\leq Nd(f((F\circ f)(g)),f(g))+N\leq N^{2}+N.$$
So, taking $n$ to be greater than each of $\{2N, 3N^{2}+N, N, N^{2}+N\}$, we have that $(f,F)$ is an $n$-QI pair.
\end{proof}

Given and $n$-QI pair $(f,F)$, we now define a function $\lFf$ on $G$ whose value at $g$ encodes the local behavior of $F$ near $f(g)$. Our technical lemma will state that the collection of all $(df,\lFf)$ forms a subshift of finite type.

\begin{definition}
\label{definition:lFf}
If $(f,F)\in \QIP_n(G,H)$, then define
$$\lFf:G\To B(n^{2}+n,1_{G})^{B(n,1_{H})}$$
by setting, for $g\in G$ and $k\in B(n,1_{H})$,
$$(\lFf(g))(k)=g^{-1}F(f(g)k).$$
We shall write $\la \lFf(g), k \ra$ for $(\lFf(g))(k).$
\end{definition}

In other words, $\lFf$ records the values of $F$ on $B(n,f(g))$ (relative to $g$). The following proposition implies that we can always replace $(f,F)\in \QIP_n(G,H)$ by some other $n$-QI pair $(\tilde{f},\tilde{F})$ such that $\tilde{f}(1_{G})=1_{H}$ and $(d\tilde{f},\ell_{\tilde{F}\tilde{f}})$ remains equal to $(df,\lFf)$.

\begin{proposition}
\label{proposition:translation}
Suppose $(f_0,F_0)\in \QIP_n(G,H)$. Define, for any $h_0\in H$, functions $f_1:G\To H$ and $F_1:H\To G$ by
$$f_1(g)=h_0 f_0(g)\quad,\quad F_1(h)=F_0(h_0^{-1}h).$$
Then $(f_1,F_1)\in\QIP_n(G,H)$, with $df_1=df_0$ and $\ell_{F_0f_0}=\ell_{F_1f_1}$.
\end{proposition}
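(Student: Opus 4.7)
The plan is to verify each of the four claims (that $(f_1,F_1)$ is an $n$-QI pair, plus the two equalities $df_1=df_0$ and $\ell_{F_1f_1}=\ell_{F_0f_0}$) by direct calculation, exploiting the left-invariance of the word metric on $H$ and the algebraic cancellation of $h_0$ with $h_0^{-1}$.

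First I would check that $f_1$ is $n$-Lipschitz: since the word metric on $H$ is left-invariant, $d(f_1(g),f_1(g'))=d(h_0f_0(g),h_0f_0(g'))=d(f_0(g),f_0(g'))\leq n\,d(g,g')$. Similarly $F_1$ is $n$-Lipschitz because $d(F_1(h),F_1(h'))=d(F_0(h_0^{-1}h),F_0(h_0^{-1}h'))$ and left-invariance of the metric on $H$ gives $d(h_0^{-1}h,h_0^{-1}h')=d(h,h')$. For the quasi-inverse conditions, I would compute $F_1\circ f_1(g)=F_0(h_0^{-1}h_0f_0(g))=F_0\circ f_0(g)$, so $d(F_1\circ f_1(g),g)=d(F_0\circ f_0(g),g)\leq n$, and $f_1\circ F_1(h)=h_0f_0(F_0(h_0^{-1}h))$, whence $d(f_1\circ F_1(h),h)=d(f_0\circ F_0(h_0^{-1}h),h_0^{-1}h)\leq n$.

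Next I would verify the derivative equality by direct expansion: for $g\in G$ and $s\in S$,
\[
\langle df_1(g),s\rangle=f_1(g)^{-1}f_1(gs)=(h_0f_0(g))^{-1}(h_0f_0(gs))=f_0(g)^{-1}f_0(gs)=\langle df_0(g),s\rangle,
\]
so $df_1=df_0$. Finally, for the $\ell$-equality, I would compute, for $g\in G$ and $k\in B(n,1_H)$,
\[
\langle\ell_{F_1f_1}(g),k\rangle=g^{-1}F_1(f_1(g)k)=g^{-1}F_0(h_0^{-1}h_0f_0(g)k)=g^{-1}F_0(f_0(g)k)=\langle\ell_{F_0f_0}(g),k\rangle,
\]
giving $\ell_{F_1f_1}=\ell_{F_0f_0}$.

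There is no substantive obstacle here: the proposition is essentially an observation that the data $(df,\ell_{Ff})$ is invariant under post-composition of $f$ with a left translation (and the corresponding pre-composition of $F$), which is precisely the cancellation exhibited by the algebraic identities above. The only thing to be a little careful about is using left-invariance of the word metric (not right-invariance), which is automatic from the definition via a symmetric generating set.
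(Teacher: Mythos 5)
Your proof is correct and is precisely the direct verification the paper intends (the paper explicitly leaves this proof to the reader). All four checks --- the Lipschitz bounds via left-invariance of the word metric, the two quasi-inverse conditions, and the cancellation of $h_0$ in $df$ and $\ell_{Ff}$ --- are carried out correctly.
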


The proof is left to the reader. We can now state the key lemma of this section.

\begin{lemma}
\label{lemma:qipairshift}
Let $G$ and $H$ be finitely presented groups, and let
$$\AAA=B(n,1_{H})^{S}\times B(n^{2}+n,1_{G})^{B(n,1_{H})}.$$
The set
$$\{(df,\lFf):(f,F)\in \QIP_{n}(G,H)\}\subset \AAA^{G}$$
is a subshift of finite type.  A finite set of forbidden patterns defining this subshift may be computed when $G$ has decidable word problem---that is for fixed $G$ and $H$, with $G$ having decidable word problem, there is an algorithm which consumes $n$ and determines a finite set of forbidden patterns which define the desired subshift.
\end{lemma}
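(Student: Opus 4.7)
The plan is to imitate the strategy of Theorem~\ref{theorem:derivativeshift}: specify a finite list of local conditions on configurations $\sigma = (\sigma_1, \sigma_2) \in \AAA^G$, each encoded by finitely many forbidden patterns, whose conjunction holds precisely when $\sigma = (df, \ell_{Ff})$ for some $(f, F) \in \QIP_n(G, H)$. The first condition is that $\sigma_1$ lies in the derivative subshift of Theorem~\ref{theorem:derivativeshift}, guaranteeing $\sigma_1 = df$ for some $n$-Lipschitz $f: G \to H$; by Proposition~\ref{proposition:translation} I normalize $f(1_G) = 1_H$, so that $f(g) = \int_{1_G \cdot w}\sigma_1$ for any word $w$ representing $g$.

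The remaining conditions ensure that $\sigma_2$ encodes a consistent $n$-Lipschitz two-sided quasi-inverse $F$. Read $g\langle\sigma_2(g), k\rangle$ as a candidate for $F(f(g)k)$ when $k \in B(n, 1_H)$; the alphabet bound $|\langle\sigma_2(g),k\rangle|_G \leq n^2+n$ immediately bounds the separation between two $G$-base points representing the same $H$-point, so setting $R := 2n^2 + 3n$ I only need to impose conditions for pairs $g_1, g_2$ at $G$-distance at most $R$. For such pairs and all $k_1, k_2 \in B(n, 1_H)$ I impose: \emph{compatibility}, requiring that whenever $\int_{g_1 \cdot w}\sigma_1 = k_1 k_2^{-1}$ for some word $w$ of length $\leq R$ representing $g_1^{-1}g_2$, we have $\langle\sigma_2(g_1), k_1\rangle = (g_1^{-1}g_2)\langle\sigma_2(g_2), k_2\rangle$; and \emph{Lipschitzness}, requiring that whenever $\int_{g_1 \cdot w}\sigma_1 = k_1 t k_2^{-1}$ for some $t \in T$, the corresponding candidate $F$-values differ by a $G$-element of norm at most $n$. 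I also impose the quasi-inverse conditions $|\langle\sigma_2(g), 1_H\rangle|_G \leq n$ (left) and $|k^{-1} \int_{g \cdot w}\sigma_1|_H \leq n$ for $w$ representing $\langle\sigma_2(g), k\rangle$ (right, for image points). Finally, I impose a \emph{local quasi-surjectivity} condition on $\sigma_1$: for every $g \in G$ and every $h \in B(n+1, 1_H)$, some word $w \in S^*$ of length at most $n^2 + 2n$ satisfies $d(\int_{g \cdot w}\sigma_1, h) \leq n$. Each condition depends on $\sigma$ in a bounded $G$-ball and so is an SFT constraint.

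For the forward direction, if $(f, F) \in \QIP_n(G, H)$ then $(df, \ell_{Ff})$ directly verifies every condition; for local quasi-surjectivity take $w$ to be a geodesic representative of $g^{-1} F(f(g) h)$, whose length is bounded by the Lipschitz and quasi-inverse estimates. For the backward direction, given $\sigma$ satisfying all conditions, I recover $f$ from $\sigma_1$ and define $F$ on $D := \bigcup_g f(g) B(n, 1_H) \subseteq H$ by $F(f(g) k) = g\langle\sigma_2(g), k\rangle$; compatibility makes $F$ well defined, Lipschitzness gives that $F|_D$ is $n$-Lipschitz, and the two quasi-inverse clauses yield the QI-pair identities at every point of $D$.

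The main obstacle I expect is showing $D = H$, so that $F$ is globally defined. Here the local quasi-surjectivity clause is essential and is applied iteratively: given $h \in H$, start with $g_0 = 1_G$; if $d(f(g_0), h) > n$, take the point $h_1$ at distance $n+1$ from $f(g_0)$ along an $H$-geodesic to $h$, and apply local quasi-surjectivity at $g_0$ to find $g_1$ with $d(f(g_1), h_1) \leq n$. Then $d(f(g_1), h) \leq d(f(g_0), h) - 1$, and the iteration terminates after finitely many steps with $h \in B(n, f(g_m)) \subset D$. Once $D = H$, the partial verifications assemble into $(f, F) \in \QIP_n(G, H)$ with $(df, \ell_{Ff}) = \sigma$. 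For the computability claim, each local condition reduces to a bounded search over $S^*$-words together with evaluations of the integral (iterated multiplication in the finite sets $B(n,1_H)$ and $B(n^2+n,1_G)$) and equality comparisons; decidable word problem in $G$ suffices to determine which words of bounded length represent the same element and hence to enumerate the forbidden patterns algorithmically from $n$.
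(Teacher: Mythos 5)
Your overall architecture parallels the paper's (first force $\sigma_1=df$ via Theorem~\ref{theorem:derivativeshift}, then impose local conditions making $\sigma_2$ a coherent family of local quasi-inverses, then reassemble a global $F$), but the backward direction has a genuine gap at the step ``compatibility makes $F$ well defined.'' You only impose compatibility for base points $g_1,g_2$ with $d(g_1,g_2)\leq R=2n^2+3n$, justifying this by saying the alphabet bound ``bounds the separation between two $G$-base points representing the same $H$-point.'' That justification is circular: the bound $d(g_1,g_2)\leq 2n^2+2n$ when $f(g_1)k_1=f(g_2)k_2$ is derived from the existence of a \emph{globally defined} $n$-Lipschitz left quasi-inverse $F$, which is exactly what the converse direction must produce. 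For a configuration merely satisfying your local rules, nothing prevents $f(g_1)k_1=f(g_2)k_2$ with $d(g_1,g_2)>R$, in which case the two candidate values for $F$ at that point are never compared and may disagree. A concrete counterexample: take $G=\ZZ$, $H=\ZZ/N$ with $N$ huge compared to $n$, $f(x)=x\bmod N$, and let the candidate inverse near $x$ send $h$ to the lift of $h$ nearest $x$. This satisfies every one of your conditions (Lipschitzness, both quasi-inverse clauses, compatibility at scale $R$, local quasi-surjectivity), so your subshift is nonempty, yet $\QIP_n(\ZZ,\ZZ/N)$ is empty. The telltale sign is that none of your forbidden patterns involve $K_H$, the relator length of $H$, so your argument never uses finite presentability of $H$ --- which the statement genuinely requires.

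The paper closes exactly this hole with a covering-space argument: it defines $\Delta=\{(g\la\oml(g),k\ra,f(g)k)\}\subset G\times H$ and a graph $\RRD$ over it, shows $\RRD\To\Cay_T H$ is a covering space, and then shows that every relator of $H$ (of length at most $K_H$) lifts to a loop, which forces the covering to have degree $1$ and hence $\Delta$ to be the graph of a function. Making relator loops lift is precisely why the paper's locality constants $M>K_H$ and $N>nM$ depend on the presentation of $H$, and why each local chart $\Fomg$ is required to be defined on the large set $\Nn_M f(B(N,g))$ rather than just on $B(n,f(g))$. To repair your proof you would need to enlarge your compatibility/consistency radius to depend on $K_H$ and add a condition guaranteeing that walking a candidate value of $F$ around any relator loop of $H$ returns it to its starting value; your iterative local-quasi-surjectivity argument for $D=H$ is fine and can be kept, but well-definedness must be addressed by something equivalent to the paper's lifting of relators.
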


\begin{proof}
Fix $M\in\NN$ strictly greater than $n$ and $K_{H}$ (the length of the longest relator of $H$). Fix $N\in \NN$ be strictly greater than $nM$. Obviously we may assume that $G$ is quasi isometric to $H$, as the empty set is certainly a subshift of finite type. If $G$ has decidable word problem, then so does $H$, as the word problem is a QI invariant for finitely presented groups \cite[Theorem 2.2.5]{wp}.

We will presently define a set $X\subset \AAA^{G}$. We shall then show the following, in no particular order.
\begin{itemize}
\item $X$ is a subshift of finite type.
\item If $G$ has decidable word problem, we can compute forbidden patterns for $X$.
\item $X$ contains $\{(df,\lFf):(f,F)\in \QIP_{n}(G,H)\}$.
\item Every element $\sigma=(\omd,\oml)$ of $X$ is of the form $(df,\lFf)$ for some $n$-QI pair $(f,F)$.
\end{itemize}

\begin{definition}
Let $X$ be the set of all $\sigma=(\omd,\oml)\in\AAA^{G}$ satisfying the following two conditions.

First, $\omd=df$ for some $f\in \Lip_n(G,H)$. 

Second, letting $f\in \Lip_n(G,H)$ be such that $f(1_{G})=1_{H}$ and $df=\omd$, we have that for all $g\in G$, there is a function
$$\Fomg:\Nn_M f(B(N,g))\To B(n+N+nM,g)$$
such that the following properties hold.
\begin{itemize}
\item $\Fomg$ is $n$-Lipschitz
\item $\Fomg$ is a left quasi inverse to $f$, i.e., $\Fomg\circ f(g')\in B(n,g')$ for $g'\in B(N,g)$.
\item $\Fomg$ is a right quasi inverse to $f$, i.e., $f\circ\Fomg(h)\in B(n,h)$ for $h\in \Nn_M f(B(N,g))$.
\item $\Fomg$ is compatible with $\oml$ on $B(n+N+nM+n^{2},g)$, i.e, for $g'\in B(n+N+nM+n^{2},g)$, if $f(g')k\in \Nn_M f(B(N,g))$ for some $k\in B(n,1_{H})$, then
$$g'\la\oml(g),k\ra=\Fom(f(g')k).$$
\end{itemize}
\end{definition}

We now verify that $X$ has the desired properties.

\begin{proposition}
$X$ contains all $(df,\lFf)$ such that $(f,F)$ is an $n$-QI pair.
\end{proposition}
\begin{proof}
By definition of an $n$-QI pair, $(df,\lFf)$ satisfies the first defining condition of $X$. Taking $\Fomg=F$, we see that it also satisfies the second defining condition.
\end{proof}

\begin{proposition}
$X$ is a subshift of finite type. If $G$ has decidable word problem, we may compute forbidden patterns for $X$.
\end{proposition}
\begin{proof}
By (the proof of) Theorem \ref{theorem:derivativeshift}, to ensure that $\omd=df$ for some $f\in \Lip_n(G,H)$, it suffices to check that
$$\int_{g}^{gw_{1}}\omd=\int_{g}^{gw_{2}}$$
whenever $w_{1},w_{2}\in S^*$ represent the same element of $G$ and have length at most $K_{G}$ (the length of the longest relator of $G$).

To see that the second defining condition of $X$ is local, it suffices to rephrase the desired properties for $\Fomg$ in terms of the function
$$L_{\sigma g}:h\mapsto g^{-1}\Fomg(f(g)h).$$
For instance, the domain of $L_{\sigma g}$ can be written as
$$\Nn_M \left\{\int_{g}^{g'}df:g'\in B(N,g)\right\},$$
which can be determined from $\sigma|_{B(N,g)}$, and the condition that $\Fomg$ is a left quasi inverse to $f$ can be rephrased as
$$L_{\sigma g}\left(\int_{g}^{g'}df\right)\in B(n,g^{-1}g')$$
for $g'\in B(N,g)$, which can be checked from $\sigma|_{B(N,g)}$. In this way, forbidden patterns enforcing the second defining condition may be defined on the ball $B$ in $G$ of radius $n+N+nM+n^{2}$.

When $G$ has decidable word problem, we can actually construct $B$ via Turing machine, and enumerate all the possibilities for $\sigma|_{B}$. For each of these possibilities, we may algorithmically enumerate possible $L_{\sigma g}$ and then check whether they satisfy the given conditions. Hence, we may compute forbidden patterns for $X$.
\end{proof}

\begin{proposition}
If $\sigma=(\omd,\oml)\in X$, then there exists $(f,F)\in\QIP_n(G,H)$ such that $\sigma=(df,\lFf)$.
\end{proposition}
\begin{proof}
As usual, let $f\in \Lip_n(G,H)$ be such that $f(1_{G})=1_{H}$ and $df=\omd$. In order to describe our proof strategy, we need a few definitions.

\begin{definition}
Let $\Delta=\{(g\la \oml(g),k\ra,f(g)k)\in G\times H\}$. Let $\RRD$ be the graph with vertices $\Delta$ and an edge connecting $(g,h)$ to $(g',h')$ whenever
\begin{itemize}
\item $d(h,h')=1,$
\item and $d(g,g')\leq N$.
\end{itemize}
\end{definition}

We need to show that $\Delta$ is the graph of some function $F:H\To G$ and that $(f,F)\in\QIP_n(G,H)$. To show that $\Delta$ is a graph, it suffices to show that $\RRD\To\Cay_T H$ is a covering space and that every relator of $H$ lifts to a loop in $\RRD$.

\paragraph{$\RRD\To \Cay_T H$ is a covering space.} It suffices to show that the neighbors of each vertex of $R\Delta$ exactly correspond to the elements of $T$.  Let $(g\la\oml(g),k \ra,f(g)k)$ be a vertex of $R\Delta$, we will see that its neighbors are exactly
$$\{(\Fomg(f(g)kt),f(g)kt):t\in T\}.$$
Certainly, these are all neighbors of $(g\la\oml(g),k\ra,f(g)k)$, and if $(g'\la \oml(g'),k'\ra,f(g')k')$ is some other neighbor, then $f(g')k'=f(g)kt$ for some $t\in T$ and $g'\in B(N,g)$. But then by the defining conditions of $X$, we must have that $g'\la \oml(g'),k'\ra=\Fomg(f(g)kt)$ as desired.

\paragraph{Relators of $H$ lift to loops in $\RRD$.} Let $w=s_{0}s_{1}\ldots s_{m}$ and $w'=s'_{0}s'_{1}\ldots s'_{m'}$ be words in $T$ of length at most $K_{H}$, where $K_{H}$ is the length of the longest relator of $H$, such that $w$ and $w'$ represent the same element $k$ of $H$. Let $(g,h)$ be some vertex of $\Delta$. Then by the defining conditions of $X$, and the fact that $M>K_{H}$, we have that the paths with vertices
$$(g,h),(\Fomg(hs_{0}),hs_{0}),(\Fomg(hs_{0}s_{1}),hs_0s_1),\ldots
(\Fomg(hs_0\ldots s_m),hs_{0}\ldots s_m)$$
and
$$(g,h),(\Fomg(hs'_{0}),hs'_{0}),(\Fomg(hs'_{0}s'_{1}),hs'_0s'_1),\ldots
(\Fomg(hs'_0\ldots s'_m),hs'_{0}\ldots s'_{m'})$$
share the same endpoints and project to the paths starting at $h$ labeled by $w$ and $w'$ respectively. In other words, the loop at $h$ formed by $w$ followed by the reverse of $w'$ lifts to $\RRD$.

It follows that every loop in $\Cay_T H$ lifts to $\RRD$. Hence, the fibers of $\RRD\To \Cay_T H$ have cardinality $1$, i.e., $\Delta$ is a graph of some function $F:H\To G$. Since $F$ must locally agree with the $\Fomg$, we see by the defining conditions of $X$ that $(f,F)$ is an $n$-QI pair.\end{proof}

We have completed the proof of the lemma.\end{proof}

\subsection{Weak aperiodicity.}
\label{subsection:weak}
We shall now apply Lemma \ref{lemma:qipairshift} to show if $G$ is finitely presented with no weakly aperiodic subshift of finite type, then $\{G\}$ must be QI-rigid (up to taking quotients by finite groups). The idea is that if there is some $n$-QI pair $(f,F)$ such that $f(1_{G})=1_{H}$ and $(df,\lFf)$ has finite index stabilizer in $G$, then $f$ restricts to an isomorphism from some finite index subgroup of $G$ to a finite index subgroup of $H$. To see this, we first need the following proposition.

\begin{proposition}
\label{proposition:periods}
If $(df,\lFf)$ is $\pi$ periodic for some $(f,F)\in \QIP_n(G,H)$ and $\pi\in G$, then for all $g\in G$,
$$f(\pi g)=f(\pi)f(1_{G})^{-1}f(g).$$
\end{proposition}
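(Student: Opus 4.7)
The plan is to exploit only the $df$-component of the periodicity hypothesis together with the fundamental theorem of integration (Lemma \ref{lemma:propertiesofintegral}).

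First I would unpack the hypothesis that $(df,\lFf)\cdot \pi = (df,\lFf)$: projecting onto the first coordinate gives $df(\pi h)=df(h)$ for every $h\in G$ (we will not need the $\lFf$-component at all for this proposition). Concretely, for each $h\in G$ and $s\in S$ this reads $\langle df(\pi h),s\rangle = \langle df(h),s\rangle$.

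Next, fix $g\in G$ and choose any word $w=s_0 s_1\cdots s_k\in S^{\ast}$ representing $g$. By the fundamental theorem,
$$f(1_G)^{-1}f(g)=\int_{1_G\cdot w}df\quad\text{and}\quad f(\pi)^{-1}f(\pi g)=\int_{\pi\cdot w}df.$$
Writing out the telescoping products from Definition \ref{definition:integral} and applying $\pi$-periodicity of $df$ term by term,
$$\int_{\pi\cdot w}df=\prod_{i=0}^{k}\langle df(\pi s_0\cdots s_{i-1}),s_i\rangle=\prod_{i=0}^{k}\langle df(s_0\cdots s_{i-1}),s_i\rangle=\int_{1_G\cdot w}df.$$
Hence $f(\pi)^{-1}f(\pi g)=f(1_G)^{-1}f(g)$, which rearranges to the claimed identity $f(\pi g)=f(\pi)f(1_G)^{-1}f(g)$.

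There is no real obstacle here: the whole content is that translating the basepoint of the integral by $\pi$ does not change the value of the integral when $df$ is $\pi$-invariant, and Lemma \ref{lemma:propertiesofintegral} then converts that equality of integrals back into the desired equality of values of $f$. The only thing one should take care with is to note that the claim is only about $f$ (not about $F$), so the $\lFf$-component of the periodicity plays no role in this particular proposition.
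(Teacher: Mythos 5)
Your proposal is correct and follows essentially the same route as the paper: the paper's one-line proof writes $f(\pi g)=f(\pi)\int_{\pi}^{\pi g}df=f(\pi)\int_{1}^{g}(df\cdot\pi)=f(\pi)\int_{1}^{g}df$, which is exactly your change-of-basepoint-plus-term-by-term-periodicity argument followed by the fundamental theorem of Lemma \ref{lemma:propertiesofintegral}. Your observation that the $\lFf$-component is not needed is also consistent with the paper's proof.
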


\begin{proof}
$$f(\pi g)=f(\pi)\int_{\pi}^{\pi g} df=f(\pi)\int_{1}^{g}(df\cdot \pi)$$
$$=f(\pi)\int_{1}^{g}df=f(\pi)f(1_{G})^{-1}f(g).$$
\end{proof}

\begin{corollary}
\label{corollary:weakaperiodicity}
If $G$ is a finitely presented group with no weakly aperiodic subshift of finite type, and $H$ is quasi isometric to $G$, then there exist finite index subgroups $G_0\subset G$ and $H_0\subset H$ such that $H_0$ is isomorphic to the quotient of $G_0$ by a finite subgroup $K$.
\end{corollary}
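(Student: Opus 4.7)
The strategy is to apply Lemma \ref{lemma:qipairshift} together with the weak aperiodicity hypothesis to produce a quasi isometry $f \colon G \to H$ whose restriction to some finite-index subgroup $G_0 \subset G$ is a group homomorphism; the conclusion will follow from elementary group theory. Since finite presentation is a QI invariant, $H$ is finitely presented, so Lemma \ref{lemma:qipairshift} applies. Proposition \ref{proposition:qipair} yields an $n$-QI pair $(f_0,F_0) \in \QIP_n(G,H)$ for some $n$, so the subshift $X = \{(df,\lFf) : (f,F) \in \QIP_n(G,H)\} \subset \AAA^G$ is a nonempty subshift of finite type. Because $G$ admits no weakly aperiodic SFT, $X$ must contain a point $\sigma = (df,\lFf)$ whose stabilizer $G_0$ has finite index in $G$. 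By Proposition \ref{proposition:translation} applied with $h_0 = f(1_G)^{-1}$, I may further assume $f(1_G) = 1_H$ without altering $\sigma$ or its stabilizer $G_0$.

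Proposition \ref{proposition:periods} applied to every $\pi \in G_0$ then gives $f(\pi g) = f(\pi) f(g)$ for all $g \in G$. Restricting to $g \in G_0$ shows that $\phi := f|_{G_0} \colon G_0 \to H$ is a group homomorphism, with image $H_0 := \phi(G_0)$ and kernel $K := \ker \phi$. Since $f$ is a quasi isometric embedding, the preimage $K = f^{-1}(1_H) \cap G_0$ is bounded in $G$ and hence finite; the first isomorphism theorem gives $G_0 / K \cong H_0$.

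It remains to verify $[H : H_0] < \infty$. Decomposing $G = \bigsqcup_{i=1}^k G_0 g_i$ into left cosets and using $f(\pi g) = f(\pi) f(g)$ for $\pi \in G_0$, I obtain $f(G_0 g_i) = H_0 f(g_i)$ and therefore $f(G) = \bigcup_{i=1}^k H_0 f(g_i)$. Since $f$ is quasi surjective with constant $n$, every $h \in H$ lies in $f(g) B(n,1_H)$ for some $g \in G$, and hence in $H_0 f(g_i) B(n,1_H)$ for the appropriate $i$. Thus $H = H_0 \cdot F$ for the finite set $F := \bigcup_i f(g_i) B(n,1_H)$, exhibiting $H$ as a finite union of right cosets of $H_0$. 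The main delicate point is securing the normalization $f(1_G) = 1_H$ and a finite-index stabilizer simultaneously; Proposition \ref{proposition:translation} ensures compatibility, after which Proposition \ref{proposition:periods} converts the coarse stabilizer data into genuine algebraic structure.
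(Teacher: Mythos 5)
Your proof is correct and follows essentially the same route as the paper's: nonemptiness of the QI-pair subshift via Proposition \ref{proposition:qipair} and Lemma \ref{lemma:qipairshift}, a finite-index stabilizer from the no-weak-aperiodicity hypothesis, normalization via Proposition \ref{proposition:translation}, and Proposition \ref{proposition:periods} to make $f|_{G_0}$ a homomorphism with finite kernel and finite-index image. Your coset-counting verification that $[H:H_0]<\infty$ simply fills in a step the paper asserts without detail.
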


\begin{proof}
For $n$ sufficiently large, we have $\QIP_n(G,H)$ nonempty. By Lemma \ref{lemma:qipairshift}, the set
$$\{(df,\lFf):(f,F)\in \QIP_{n}(G,H)\}$$
is then a nonempty subshift of finite type. By assumption, some point $(df,\lFf)$ of this subshift must be fixed by some finite index subgroup $G_{0}\subset G$. By Proposition \ref{proposition:translation}, we may assume without loss of generality that $f(1_{G})=1_{H}$. By Proposition \ref{proposition:periods}, the restriction of $f$ to $G_{0}$ is a homomorphism. Because $f$ is a quasi isometry, the kernel $K$ of $f|_{G_{0}}$ is finite and the image $H_0=f(G_{0})$ is finite index in $H$.
\end{proof}

\section{Pullbacks and QI-invariance.}
\label{section:pullbacks}
In this final section, we prove our remaining main theorems.
\begin{itemize}
\item For finitely presented groups, having decidable domino problem is a QI-invariant (Corollary \ref{corollary:dominoproblem})
\item For finitely presented torsion free groups, having a strongly aperiodic subshift of finite type is a QI-invariant (Corollary \ref{corollary:strongaperiodicity}).
\end{itemize}
The key construction is given by Lemma \ref{lemma:pullbackshift}. This will be a subshift of finite type parameterizing tuples $(f,F,\sigma)$ where $(f,F)\in \QIP_n(G,H)$ and $\sigma$ is a configuration of some subshift of finite type $X_{H}$ on $H$. This ``pullback subshift" will be empty exactly when $X_{H}$ is, allowing us to solve the domino problem on $H$ by solving it on $G$. Furthermore, periodic states of the pullback subshift will correspond to periodic states of $X_{H}$, so it will be strongly aperiodic when $X_{H}$ is. In order to describe the construction, we need a few definitions.

\begin{definition}
\label{definition:higherblock}
If $\sigma\in A^{H}$, then $B_n\sigma \in (A^{B(n,1_{H})})^{H}$ is given by setting, for $(h,k)\in H\times B(n,1_{H})$,
$$(B_n\sigma(h))(k)=\sigma(hk).$$
We will write $\la B_n\sigma(h), k\ra$ for $(B_n\sigma(h))(k)$
\end{definition}

This is an instance of the higher block subshifts considered in \cite{carpen}, and in particular, if $X_{H}\subset A^{H}$ is a subshift of finite type, then so is $B_n(X_{H})$, although we shall not need this fact.

\begin{definition}
\label{definition:pullback}
Given $\sigma\in A^{H}$ and $f:G\To H$, let
$$f^*\sigma = \sigma\circ f.$$
We say that $f^* \sigma$ is the pullback of $\sigma$ under $f$.
\end{definition}

Of course, if $X_{H}$ is a subshift of finite type, and $f:G\To H$ a quasi isometry, it is unlikely that $f^* X_{H}$ is itself a subshift of finite type. In fact, $f^* \sigma$ may not even see all the data of $\sigma$---as $f$ does not have to be surjective---although $f^*B_n\sigma$ certainly will. To produce a subshift of finite type on $G$ from $X_{H}$, it is necessary to consider the pullbacks of $\sigma\in X_{H}$ under all possible $f$ such that $(f,F)\in \QIP_n(G,H)$.

\begin{lemma}
\label{lemma:pullbackshift}
Let $G$ and $H$ be finitely presented groups, and $X_{H}\subset A^{H}$ a subshift of finite type. Let
$$\AAA=B(n,1_{H})^{S}\times B(n^{2}+n,1_{G})^{B(n,1_{H})}\times A^{B(n,1_{H})}.$$
Then
$$\{(df,\lFf,f^*(B_{n}\sigma)):(f,F)\in \QIP_{n}(G,H),\sigma\in X_{H}\}\subset
\AAA^{G}$$
is a subshift of finite type. A finite set of forbidden patterns defining this subshift may be computed from forbidden patterns defining $X_{H}$ when $G$ has decidable word problem.
\end{lemma}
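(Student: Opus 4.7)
The plan is to build on Lemma \ref{lemma:qipairshift}, which already handles the first two coordinates of $\AAA^G$. Given a candidate tuple $(\omd, \oml, \omega) \in \AAA^G$, Lemma \ref{lemma:qipairshift} supplies forbidden patterns ensuring that $(\omd, \oml) = (df, \lFf)$ for some $(f, F) \in \QIP_n(G, H)$, which by Proposition \ref{proposition:translation} we may normalize so that $f(1_G) = 1_H$. The remaining task is to characterize, via finitely many additional locally-checkable conditions on $\omega$, exactly when $\omega$ equals the pullback $f^*(B_n \sigma)$ for some $\sigma \in X_H$.

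I would impose two further families of forbidden patterns. The first enforces \emph{consistency}: whenever $g_1, g_2 \in G$ and $k_1, k_2 \in B(n, 1_H)$ satisfy $f(g_1) k_1 = f(g_2) k_2$, we require $\la \omega(g_1), k_1 \ra = \la \omega(g_2), k_2 \ra$. The estimate carried out in the proof of Proposition \ref{proposition:qipair} shows that any such pair must satisfy $d(g_1, g_2) \leq 2n^2 + 2n$, and the equation $f(g_1) k_1 = f(g_2) k_2$ depends only on $\omd$ restricted to a bounded ball (via the fundamental theorem in Lemma \ref{lemma:propertiesofintegral} applied to words of bounded length), so this is a local condition. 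The second enforces \emph{$X_H$-compatibility}: for each forbidden pattern $\alpha : F_\alpha \To A$ of $X_H$ and each $k \in B(n, 1_H)$, rule out any configuration in which the reconstructed values of $\sigma$ on the finite set $f(g_0) k F_\alpha$ equal $\alpha$. Because $(f,F) \in \QIP_n(G,H)$ forces $f$ to be $n$-quasi-surjective, as $g_0$ and $k$ vary the points $h = f(g_0) k$ exhaust $H$, so this ensures no forbidden pattern of $X_H$ occurs anywhere in $\sigma$.

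Both inclusions should then follow. Every $(df, \lFf, f^*(B_n\sigma))$ with $(f, F) \in \QIP_n(G, H)$ and $\sigma \in X_H$ manifestly satisfies all three families. Conversely, any $(\omd, \oml, \omega)$ in the new subshift yields an $n$-QI pair $(f, F)$ with $f(1_G) = 1_H$ by Lemma \ref{lemma:qipairshift} and Proposition \ref{proposition:translation}; consistency lets us define $\sigma : H \To A$ by $\sigma(f(g) k) = \la \omega(g), k \ra$, the right-hand side being independent of the choice of $(g, k)$ representing $h = f(g) k$ and defined on all of $H$ by quasi-surjectivity; tautologically $f^*(B_n \sigma) = \omega$; and $X_H$-compatibility gives $\sigma \in X_H$. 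The main delicacy will be calibrating the locality radii uniformly across the three layers, so that a single finite window in $G$ suffices to verify each constraint and to reconstruct $\sigma$ on a neighborhood large enough to accommodate any $F_\alpha$.

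For the computability claim, if $G$ has decidable word problem, then so does $H$, since the word problem is a QI invariant among finitely presented groups (as noted in the proof of Lemma \ref{lemma:qipairshift}). Lemma \ref{lemma:qipairshift} then yields computable forbidden patterns for the QI-pair layer, the consistency condition reduces to finitely many instances of the word problem in $H$ per window, and the $X_H$-compatibility patterns are generated mechanically from the given forbidden patterns of $X_H$. Assembling these, we obtain an algorithmically computable finite set of forbidden patterns defining the pullback subshift.
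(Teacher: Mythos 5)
Your proposal is correct and follows essentially the same route as the paper's proof: reduce to Lemma \ref{lemma:qipairshift} for the first two coordinates, impose a local consistency rule (using the same bound $d(g_1,g_2)\leq 2n^2+2n$) so that the third coordinate reconstructs a unique $\sigma_0\in A^H$ via quasi-surjectivity, and then translate each forbidden pattern of $X_H$ into a local rule on $G$ detecting its occurrences through the points $f(g)k$. The only cosmetic difference is that the paper quantifies its pattern-exclusion rule over all tuples $(g_i,k_i)$ realizing a translate of the pattern's domain rather than anchoring at a single base point, which sidesteps the radius-calibration issue you flag at the end.
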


\begin{proof}
Note that, as $G$ is finitely presented, it will have decidable word problem if and only if $H$ does.

Suppose $\sigma\in\AAA^{G}$. By Lemma \ref{lemma:qipairshift}, there exists a finite set of forbidden patterns (computable when $G$ has decidable word problem) which ensures that $\sigma$ has the form $(df,\lFf,\sigma_{X})$ for some $n$-QI pair $(f,F)$. Thus, we must find local rules which ensure that $\sigma_X$ is of the form $f^*(B_n \sigma_{0})$ for some $\sigma_0 \in X_{H}$.

First, we mandate that whenever $f(g_1)k_1=f(g_2)k_2$ for some $g_1,g_2\in G$ and $k_1,k_2\in B(n,1_{H})$, then $\la\sigma_X(g_1),k_1\ra=
\la\sigma_X(g_2),k_2\ra$. This is a local rule because the equality $f(g_{1})k_{1}=f(g_{2})k_{2}$ implies that
$$d(F\circ f(g_{1}),F\circ f(g_{2}))\leq 2n^{2},$$
and hence $d(g_{1},g_{2})\leq 2n^{2}+2n$. If $G$ and $H$ have decidable word problem, we can compute forbidden patterns enforcing this rule because we can algorithmically check whether $\int_{g_{1}}^{g_{2}}df=k_{1}^{-1}k_{2}$ from the local values of $df$. Any $\sigma_X$ following this rule can be written as $f^*(B_n \sigma_{0})$ for some unique $\sigma_0\in A^H$. In particular, take $\sigma_{0}(h)$ to be $\la\sigma_X(F(h)),(f\circ F(h))^{-1}h\ra.$

To force $\sigma_0$ to be in $X_{H}$, we simply mandate that if
$$\alpha:\{h_{1},\ldots,h_{m}\}\To A$$
is a defining forbidden pattern for $X_{H}$, then for any set $\{(g_{1},k_{1}),\ldots,(g_{m},k_{m})\}\subset G\times B(n,1_{H})$ such that
$$(f(g_{i})k_{i})^{-1}(f(g_{j})k_{j})=h_{i}^{-1}h_j$$
for all $i$ and $j$, there must be some $i$ such that $\la \sigma_X(g_{i}),k_{i}\ra\neq \alpha(h_{i})$---in other words, if some points $f(g_i)k_i$ form a translate of the defining set of the forbidden pattern $\alpha$, then when we try to reconstruct the values taken by $\sigma_0$ at those points, we do not see $\alpha$. This is a local rule by the same considerations as before, and certainly computable when $G$ and $H$ have decidable word problem. By imposing such a rule for each defining pattern of $X_{H}$, we ensure that $\sigma_X$ is in $f^*(B_n X_H)$ as desired.
\end{proof}

\begin{corollary}
\label{corollary:strongaperiodicity}
If $G$ and $H$ are finitely presented, torsion free groups, and $H$ has a strongly aperiodic subshift of finite type, then $G$ also has a strongly aperiodic subshift of finite type.
\end{corollary}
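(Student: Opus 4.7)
The plan is to pull back a strongly aperiodic SFT from $H$ to $G$ using Lemma \ref{lemma:pullbackshift}, and then show the resulting subshift is itself strongly aperiodic by combining Propositions \ref{proposition:translation} and \ref{proposition:periods} with torsion-freeness.

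Fix $n$ large enough that $\QIP_{n}(G,H)$ is nonempty (which is possible by Proposition \ref{proposition:qipair}, since $G$ is quasi isometric to $H$). Let $X_{H}\subset A^{H}$ be a strongly aperiodic SFT, and apply Lemma \ref{lemma:pullbackshift} to obtain the subshift of finite type
$$Y_{G}=\{(df,\lFf,f^{\ast}(B_{n}\sigma)):(f,F)\in \QIP_{n}(G,H),\sigma\in X_{H}\}\subset\AAA^{G}.$$
This set is nonempty because both $\QIP_{n}(G,H)$ and $X_{H}$ are. The remaining task is to show that every $\tau\in Y_{G}$ has trivial stabilizer in $G$.

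First I would fix $\tau=(df,\lFf,f^{\ast}(B_{n}\sigma))\in Y_{G}$ and suppose $\pi\in G$ stabilizes $\tau$. Using Proposition \ref{proposition:translation}, together with the shift-invariance of $X_{H}$ (which allows us to replace $\sigma$ by $\sigma\cdot f(1_{G})^{-1}$ while keeping $\tau$ unchanged), I would reduce to the case $f(1_{G})=1_{H}$. Proposition \ref{proposition:periods} then yields $f(\pi g)=f(\pi)f(g)$ for every $g\in G$. Expanding what $\pi$-periodicity of $f^{\ast}(B_{n}\sigma)$ means in the presence of this equivariance gives $\sigma(f(g)k)=\sigma(f(\pi)f(g)k)$ for all $g\in G$ and $k\in B(n,1_{H})$. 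The $n$-quasi surjectivity built into the definition of a QI pair guarantees that every $h\in H$ has the form $f(g)k$ for some such $g$ and $k$, so this forces $\sigma\cdot f(\pi)=\sigma$. Strong aperiodicity of $X_{H}$ then yields $f(\pi)=1_{H}=f(1_{G})$.

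Finally, the $n$-quasi isometric embedding inequality gives $d(\pi,1_{G})\leq n^{2}$, so the entire stabilizer $\Stab_{G}(\tau)$ is contained in a bounded ball and hence is a finite subgroup of $G$. Torsion freeness of $G$ forces it to be trivial, completing the verification that $Y_{G}$ is strongly aperiodic. The main obstacle is the middle step: converting $\pi$-periodicity of the pullback $f^{\ast}(B_{n}\sigma)$ into $f(\pi)$-periodicity of $\sigma$ itself, which leans crucially on the equivariance supplied by Proposition \ref{proposition:periods} together with quasi surjectivity of $f$. The torsion freeness hypothesis enters only at the very end, to upgrade a finite stabilizer to a trivial one.
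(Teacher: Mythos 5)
Your proposal is correct and follows essentially the same route as the paper: pull back $X_H$ via Lemma \ref{lemma:pullbackshift}, use Proposition \ref{proposition:periods} to convert $\pi$-periodicity of the pullback into $f(\pi)f(1_G)^{-1}$-periodicity of $\sigma$, and finish with finiteness of the fiber of $f$ plus torsion freeness. The only cosmetic differences are that you normalize $f(1_G)=1_H$ up front (the paper instead carries $h_\pi=f(\pi)f(1_G)^{-1}$ through the computation) and you bound the whole stabilizer at once rather than the powers $\pi^k$; both are equivalent to the paper's argument.
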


\begin{proof}
Let $X_{H}\subset A^{H}$ be a strongly aperiodic subshift of finite type, and let $X_{G}\subset \AAA^{G}$ be
$$\{(df,\lFf,f^*(B_{n}\sigma)):(f,F)\in \QIP_{n}(G,H),\sigma\in X_{H}\},$$
where $\AAA$ is as in the statement of Lemma \ref{lemma:pullbackshift}. We will show that $X_{G}$ is strongly aperiodic.

Suppose some point $(df,\lFf,f^*(B_{n}\sigma))$ has a period $\pi\in G$. By Proposition \ref{proposition:periods}, we have that $f(\pi g)=f(\pi)f(1_{G})^{-1}f(g)$ for all $g\in G$. Letting $h_{\pi}$ be $f(\pi)f(1_{G})^{-1}$, periodicity of $f^*(B_{n}\sigma)$ now tells us that for all $g\in G$,
$$(B_{n}\sigma)(f(g))=(B_{n}\sigma)(f(\pi g))=(B_{n}\sigma)(h_{\pi}f(g)),$$
and consequently, for any $k\in B(n,1_{H})$,
$$\sigma(f(g)k)=\sigma(h_{\pi}f(g)k).$$
Since every element of $H$ can be written as such a product $f(g)k$, it follows that $\sigma$ is $h_{\pi}$-periodic, and we must have $h_{\pi}=1_{H}$. As $(df,\lFf,f^*(B_{n}\sigma))$ will still be periodic under any power of $\pi$, we must have $h_{\pi^{k}}=1_{H}$, for any $k$. The set $f^{-1}\{1_{H}\}$ is finite because $f$ is a quasi isometry, so $\{\pi^{k}\}$ is finite, and $\pi$ must equal $1_{G}$ by our assumption of torsion freeness.
\end{proof}

\begin{corollary}
\label{corollary:dominoproblem}
If $G$ and $H$ are finitely presented groups, with $G$ quasi isometric to $H$, and $G$ has decidable domino problem, then $H$ has decidable domino problem.
\end{corollary}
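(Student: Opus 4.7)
The strategy is to reduce an arbitrary instance of the domino problem on $H$ to an instance of the domino problem on $G$ via the pullback subshift of Lemma \ref{lemma:pullbackshift}. Since $G$ is quasi isometric to $H$, Proposition \ref{proposition:qipair} supplies an integer $n$ with $\QIP_{n}(G,H)\neq\emptyset$; fix such an $n$ once and for all (it depends only on $G$, $H$, and the chosen generating sets, not on any input).

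Given as input a finite set $\mathcal{F}_{H}$ of forbidden patterns defining a subshift of finite type $X_{H}\subset A^{H}$, the proposed algorithm is as follows. First, invoke Lemma \ref{lemma:pullbackshift} to compute a finite set of forbidden patterns defining the pullback subshift
$$X_{G}=\{(df,\lFf,f^{\ast}(B_{n}\sigma)):(f,F)\in\QIP_{n}(G,H),\ \sigma\in X_{H}\}\subset\mathcal{A}^{G}.$$
Second, run the hypothesized decision procedure for the domino problem on $G$ to decide whether $X_{G}$ is empty, and return the answer. Correctness rests on the observation that $X_{G}$ is empty if and only if $X_{H}$ is empty: if $X_{H}=\emptyset$ there is no $\sigma$ out of which to build a point of $X_{G}$, so $X_{G}$ is vacuously empty; conversely, for any $\sigma\in X_{H}$ and any $(f,F)\in\QIP_{n}(G,H)$ (available by our choice of $n$), the tuple $(df,\lFf,f^{\ast}(B_{n}\sigma))$ is a point of $X_{G}$.

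The only delicate point is that Lemma \ref{lemma:pullbackshift} guarantees effective computability of forbidden patterns for $X_{G}$ only under the assumption that $G$ has decidable word problem. This, however, should be a consequence of the hypothesis that $G$ has decidable domino problem: given a word $w$ in the generators of $G$, one can algorithmically construct a set of forbidden patterns whose associated SFT is empty if and only if $w$ represents $1_{G}$, giving a Turing reduction of the word problem to the domino problem. I expect the verification of this reduction (or the citation of the corresponding standard fact) to be the main technical check; once decidable word problem on $G$ is established, the rest of the argument is a purely mechanical assembly of Lemma \ref{lemma:pullbackshift}, the choice of $n$, and the emptiness equivalence above.
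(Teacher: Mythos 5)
Your proposal is correct and follows essentially the same route as the paper: pull back $X_{H}$ to the subshift of Lemma \ref{lemma:pullbackshift} for an $n$ with $\QIP_{n}(G,H)\neq\emptyset$, decide its emptiness with the domino algorithm on $G$, and dispose of the word-problem hypothesis by noting that decidable domino problem implies decidable word problem (the paper invokes this as the known fact that undecidable word problem forces undecidable domino problem, citing the introduction of the Aubrun--Kari paper, rather than re-proving the reduction). The only cosmetic difference is that the paper computes a suitable $n$ by searching with the domino algorithm applied to the subshift of Lemma \ref{lemma:qipairshift}, whereas you fix $n$ non-uniformly, which is equally valid for a decidability statement.
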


\begin{proof}
If $G$ has undecidable word problem, then $H$ also has undecidable word problem \cite[Theorem 2.2.5]{wp}, and therefore $G$ and $H$ both have undecidable domino problem as it is well known (see for instance the Introduction of \cite{aubrunkari}) that a group with undecidable word problem also has undecidable domino problem. Consequently, we may assume without loss of generality that $G$ and $H$ have decidable word problem.

Now, suppose we have an algorithm solving the domino problem on $G$. For any $n$, we can check whether $\QIP_n(G,H)$ is nonempty, since by Lemma \ref{lemma:qipairshift} we can compute forbidden patterns defining the subshift of finite type
$$\{(df,\lFf)|(f,F)\in \QIP_n(G,H)\}$$
and the domino problem on $G$ lets us determine if this is empty. As $G$ and $H$ are quasi isometric, we can compute an $n$ such that $\QIP_n(G,H)$ is nonempty (by trying larger and larger $n$). Given forbidden patterns defining a subshift of finite type $X_{H}\subset A^{H}$, by Lemma \ref{lemma:pullbackshift} we may compute forbidden patterns for the subshift of finite type $X_{G}\subset \AAA^{G}$ given by
$$\{(df,\lFf,f^*B_n(\sigma))|(f,F)\in \QIP_n(G,H)\},$$
which will be empty if and only if $X_{H}$ is. Since we can solve the domino problem on $G$, we can algorithmically determine emptiness of $X_{G}$, and hence of $X_{H}$.
\end{proof}

\bibliographystyle{plain}
\bibliography{bibliography}

\noindent
David Bruce Cohen\\
Department of Mathematics\\
University of Chicago\\
5734 S. University Avenue,\\
Room 208C\\
Chicago, Illinois 60637\\
E-mail: {\tt davidbrucecohen@gmail.com}\\

\end{document}